\documentclass{article}
\usepackage{amsfonts,amsmath,amsthm,amssymb}
\usepackage{graphics, epsfig}
\usepackage{color}
\usepackage{appendix}
\usepackage{ulem}
\usepackage[makeroom]{cancel}
\usepackage{fancyhdr}
\usepackage{centernot}
\usepackage{mathtools}
\usepackage{ stmaryrd }

 \usepackage[usenames,dvipsnames]{pstricks}
 \usepackage{pst-grad} 
 \usepackage{pst-plot} 
\allowdisplaybreaks

\let\TeXchi\chi
\newbox\chibox
\setbox0 \hbox{\mathsurround0pt $\TeXchi$}
\setbox\chibox \hbox{\raise\dp0 \box 0 }
\def\chi{\copy\chibox}


\newtheorem{proposition}{Proposition}[section]
\newtheorem{theorem}{Theorem}[section]

\newtheorem{remark}{Remark}[section]

\numberwithin{equation}{section}
\numberwithin{theorem}{section}
\numberwithin{definition}{section}
\numberwithin{example}{section}
\numberwithin{proposition}{section}
\numberwithin{lemma}{section}
\numberwithin{remark}{section}
\setcounter{secnumdepth}{3}
\DeclareMathOperator{\Ima}{Im}
\DeclareMathOperator{\Dom}{Dom}
\DeclareMathOperator{\pt}{pt}
\DeclareMathOperator{\Tot}{Tot}
\newcommand\blfootnote[1]{%
  \begingroup
  \renewcommand\thefootnote{}\footnote{#1}%
  \addtocounter{footnote}{-1}%
  \endgroup
}
\pagestyle{fancy}
\fancyhf[HL]{Manuel Norman}
\fancyhf[HR]{Two cohomology theories for structured spaces}
\begin{document}
\title{Two cohomology theories for structured spaces}
\author
{Manuel Norman}
\date{}
\maketitle
\begin{abstract}
\noindent In [1] we defined a new kind of space called 'structured space' which locally resembles, near each of its points, some algebraic structure. We noted in the conclusion of the cited paper that the maps $f_s$ and $h$, which are of great importance in the theory of structured spaces, have some connections with the notions of presheaves (and hence also sheaves) and vector bundles. There are well known cohomology theories involving such objects; this suggests the possibility of the existence of (co)homology theories for structured spaces which are somehow related to $f_s$ and $h$. In this paper we indeed develop two cohomology theories for structured spaces: one of them arises from $f_s$, while the other one arises from $h$. In order to do this, we first develop a more general cohomology theory (called rectangular cohomology in the finite case, and square cohomology in the infinite case), which can actually be applied also in many other situations, and then we obtain the cohomology theories for structured spaces as simple consequences of this theory.
\end{abstract}
\blfootnote{Author: \textbf{Manuel Norman}; email: manuel.norman02@gmail.com\\
\textbf{AMS Subject Classification (2010)}: 55N35, 18G60\\
\textbf{Key Words}: structured space, cohomology, Leech cohomology}
\section{Introduction}
The aim of this paper is to develop two cohomology theories starting from $f_s$ and $h$. We refer to [1] for the basic theory of structured spaces (in particular, see Section 2 for the structure map $f_s$, and see the end of Section 4 for $h$). We noted in the conclusion of [1] that these two maps are similar, in some sense, to presheaves (and then also sheaves) and vector bundles. Indeed, when we deal with presheaves we have a functor which assigns to each open subset of a topological space $X$ a set of sections and a set of restrictions; when we deal with vector bundles, we assign to each point of the considered space $X$ a certain vector space, in such a way that some properties are satisfied (for a precise definition, we need the notion of fiber). We do not go into details (see [27-30] for these topics), since these concepts are not needed in the rest of the paper. They do suggest, however, a relation with $f_s$ and $h$, since we associate to each $U_p \in \mathcal{U}$ (or to each $x \in X$, if we use $\widehat{f}_s$, which is equivalent to $f_s$, as shown in Section 2.5 of [1]) its local structure via $f_s$, and we associate to each $x \in X$ all the local structures in which it is contained via $h$. In this first section we recall what homology and cohomology are, and we also review Leech cohomology. Then, in the next section we develop a general cohomology theory for a sequence of monoids, which will be applied in Section 3 to $f_s$ and in Section 4 to $h$.
\subsection{Homology and cohomology}
We refer to [17,18] and [31] for more details on these topics. Start with a chain complex of abelian groups (or modules) $A_p$ and of homomorphisms $d_p$:
$$... \xleftarrow{d_{-1}} A_0 \xleftarrow{d_0} A_1 \xleftarrow{d_1} A_2 \xleftarrow{d_2} ... $$
Suppose this chain stops on the left with the trivial group $(0)$ (meaning that $A_t = (0)$ $\forall t < 0$, and we only write one of these groups on the left):
$$ 0 \xleftarrow{d_{-1}} A_0 \xleftarrow{d_0} A_1 \xleftarrow{d_1} A_2 \xleftarrow{d_2} ...$$
(We note that this is equivalent to:
$$ ... \xrightarrow{d_2} A_2 \xrightarrow{d_1} A_1 \xrightarrow{d_0} a_0 \xrightarrow{d_{-1}} 0$$
the important thing is that the arrows (and the maps) are from $C_{p+1}$ to $C_p$).\\
A chain complex as above is called an \textit{exact sequence} if:
$$\ker d_p = \Ima d_{p+1}$$
$\forall p \geq 0$. A homology theory measures "how far" a chain complex is from being exact. To do this, we first require that (here $0$ is the identity element of $A_{p-1}$):
$$d_p \circ d_{p+1} = 0$$
which implies that
$$ \Ima d_{p+1} \subseteq \ker d_p $$
Then, this inclusion allows us to consider the quotient:
$$ H_p(A):= \ker d_p / \Ima d_{p+1} $$
where $(A,d)$ represents the chain above, $(A_{\bullet}, d_{\bullet})$.\\
These groups (or modules) $H_p$ give us a way to measure the imperfection we were talking about. They are called homology groups (or homology modules), and they constitute a homology theory for $(A,d)$.\\
Cohomology is obtained by dualising everything in the definitions above. A cochain complex:
$$ 0 \xrightarrow{d^{-1}} A_0 \xrightarrow{d^0} A_1 \xrightarrow{d^1} A_2 \xrightarrow{d^2} ... $$
(again, we notice that this is equivalent to say:
$$ ... \xleftarrow{d_{2}} A_2 \xleftarrow{d_1} A_1 \xleftarrow{d_0} A_0 \xleftarrow{d_{-1}} 0$$
the important thing is that the arrows (and the maps) are from $C_p$ to $C_{p+1}$) is exact if
$$ d^{p+1} \circ d^p = 0 $$
$\forall p \geq 0$. The cohomology groups (or modules):
$$H^p(A):= \ker d^{p+1} / \Ima d^p$$
measure how far a cochain complex is from being exact. They consistute a cohomology theory for $(A,d):=(A_{\bullet}, d^{\bullet})$.\\
There are various kinds of homology and cohomology theories; just to cite a few, we recall the well known de Rham cohomology, sheaf cohomology, \v{C}ech cohomology, K-theory, singular (co)homology, Borel-Moore homology, intersection homology, Hochschild (co)homology, ... arriving to prismatic cohomology, developed by Bhatt and Scholze in [19]. We refer to [20-26] and [43-45] for more on these topics. Some cohomology theories turn out to be equivalent in some particular situations. For some examples of these comparisons of cohomologies, see [33-35] and also [27,31].
\subsection{Leech cohomology}
We now review an important tool which will be used later. We will only consider the aspects needed for our cohomology theories, for more details we refer to [2-16]. The idea behind Leech cohomology is to have a cohomology theory for monoids. There are well known cohomology theories for groups, for instance, but we are also interested in cohomologies for some substructures. Leech cohomology is indeed a cohomology theory for monoids. The historical development of this cohomology theory is however different from the following exposition. This is because Grillet noticed that Leech cohomology can be reconduced to Barr-Beck cotriple cohomology (see [2,3]). We refer to the works by Grillet [10-13] for more details. The main references for what follows can be found in [4-7]. We start by considering a monoid $(M, \cdot)$. We assume the reader is familiar with cohomologies of small categories, even though this is not needed for the rest of the paper. We construct a category $\mathcal{D}M$ with object set $M$ and morphism set $M^3$, with $(a,b,c): b \rightarrow abc$. Composition is given by $(\widehat{a},abc,\widehat{c})(a,b,c,)=(\widehat{a}a,b,c \widehat{c})$, and the identity morphism of an object $a \in M$ is $id_a=(e,a,e)$, with $e$ identity element of $M$. Now consider a $\mathcal{D}M$-module $\mathcal{A}:\mathcal{D}M \rightarrow \textbf{Ab}$ (the category of abelian groups), defined by associating to each $a \in M$ an abelian group $\mathcal{A}(a)$, and by assigning to each morphism $(a,b,c)$ the group homomorphism $a_{*} c^{*}: \mathcal{A}(b) \rightarrow \mathcal{A}(abc)$. So we have, following [4], two families (one of abelian groups, and one of homomorphisms):
$$ (\mathcal{A}(a))_{a \in M}, \quad (\mathcal{A}(b) \xrightarrow{a_{*}} \mathcal{A}(ab) \xleftarrow{b^{*}} \mathcal{A}(a))_{a,b \in M} $$
satisfying the following relations ($\forall a,b,c \in M$):
$$ (ab)_{*}=a_{*} b_{*}, \quad c^{*} a_{*} = a_{*} c^{*}, \quad (bc)^{*}=c^{*} b^{*}, \quad e_{*}=e^{*}= id_{\mathcal{A}(a)} $$
This preparation allows us to define a cochain complex for $M$ with coefficients in $\mathcal{A}$, and also the cohomology groups associated to it. First, define the abelian groups:
\begin{multline}\label{Eq:1.1}
C^0 _L (M, \mathcal{A}):= \mathcal{A}(e)\\
C^n _L (M, \mathcal{A}) := \lbrace f \in \prod_{a_i \in M} \mathcal{A}(a_1 \cdot \cdot \cdot a_n) : f(a_1, ..., a_n)=0 \, \text{whenever} \, a_j=e \, \text{for some} \, j \rbrace, \, \text{for} \, n>0
\end{multline}
and the coboundary maps $\partial^n : C^n _L (M, \mathcal{A}) \rightarrow C^{n+1} _L (M, \mathcal{A})$  (we will use additive notation when dealing with $C^n _L (M, \mathcal{A})$):
\begin{multline}\label{Eq:1.2}
(\partial^0 f)(a):= a_{*}(f) - a^{*}(f)\\
(\partial^n f)(a_1, ..., a_{n+1}):= (a_1)_{*} f(a_2, ..., a_n) + \sum_{j=1}^{n} (-1)^j f(a_1, ..., a_j a_{j+1}, ..., a_{n+1}) +\\
+ (-1)^{n+1} (a_{n+1})^{*} f(a_1, ..., a_n), \, for \, n>0
\end{multline}
so that we have the cochain:
\begin{equation}\label{Eq:1.3}
0 \rightarrow C^0 _L (M, \mathcal{A}) \xrightarrow{\partial^0} C^1 _L (M, \mathcal{A}) \xrightarrow{\partial^1} C^2 _L (M, \mathcal{A}) \xrightarrow{\partial^2} ...
\end{equation}
Then, we define as usual (see Section 1.1):
\begin{equation}\label{Eq:1.4}
H^n _L (M, \mathcal{A}):= H^n (C^{\bullet} _L (M, \mathcal{A}))= \ker \partial^{n+1} / \Ima \partial^n
\end{equation}
By [14] we know that:
\begin{equation}\label{Eq:1.5}
H^n _L (M, \mathcal{A}) = H^n (C^{\bullet} _L (M, \mathcal{A})) = H^n (\mathcal{D}M, \mathcal{A})
\end{equation}
To construct our new general cohomology theory we will need to use the abelian groups $C^p _L (M, \mathcal{A})$ and the coboundary maps $\partial^p$, as shown in the next section.
\section{A general cohomology theory for a sequence of monoids (square and rectangular cohomologies)}
We now develop a general cohomology theory starting from a sequence of monoids. The main idea is to construct an infinite square or rectangle using the cohomology theories that we associate to each element of the sequence, and then to follow a path $\pi$ in one of these figures in order to select some abelian groups and some homomorphisms that will form a new cochain. Finally, from this cochain we will obtain a new cohomology theory, this time associated to the sequence. It will be clear that the new cohomology theory usually consist of many cohomology groups which are exactly the same as some cohomology groups associated to some monoids. This is one of the aims: our new cohomology theory should encode as much information related to the various monoids as possible, not necessarily by using new cohomology groups (which will actually be the case, generally), but also using the ones of the other cohomology theories together, in a unique (new) cohomology. In fact, \textit{the most interesting situation is when we have something to which we associate a sequence and from which we then construct a cohomology theory as explained above. We can then forget about the sequence, and use the cohomology theory for that 'something'.} This is exactly what we will do in Section 3 and 4.\\
Consider $\lbrace M_n \rbrace_{n=0}^{\infty}$, where all the $M_n$'s are monoids (we assume that $M_j \neq M_i$ $\forall i \neq j$; later we will consider the finite case, which will be used in particular in Section 4). The idea is to use Leech cohomology for all these monoids and then construct another cohomology theory using a selection process that involves a chosen path $\pi$. The reason why we do this is suggested by our particular examples; see Section 3 below for a complete motivation of this method.\\
We want to associate to each monoid a certain abelian group; since it is not always possible to use Grothendieck completion (meaning that sometimes it leads to trivial results, see Section 3) and it is not always possible to add elements to a monoid in order to make into a group (again, see Section 3 for an example), we need to find another way to do this. Leech cohomology will solve this problem. Let $_n C^p _L (M_n,\mathcal{A}_n)$ be the abelian groups in the cochain associated to $M_n$ (with coefficient in $\mathcal{A}_n$), and let $_n \partial^p$ be the corresponding coboundary maps. We have the following:
\begin{multline}\label{Eq:2.1}
0 \rightarrow \, _0 C^0 _L (M_0, \mathcal{A}_0) \xrightarrow{ _0 \partial^0} \, _0 C^1 _L (M_0, \mathcal{A}_0) \xrightarrow{ _0 \partial^1} \, _0 C^2 _L (M_0, \mathcal{A}_0) \xrightarrow{ _0 \partial^2} ...\\
0 \rightarrow \, _1 C^0 _L (M_1, \mathcal{A}_1) \xrightarrow{_1 \partial^0} \, _1 C^1 _L (M_1, \mathcal{A}_1) \xrightarrow{_1 \partial^1} \, _1 C^2 _L (M_1, \mathcal{A}_1) \xrightarrow{_1 \partial^2} ... \qquad \qquad \quad \\
0 \rightarrow \, _2 C^0 _L (M_2, \mathcal{A}_2) \xrightarrow{_2 \partial^0} \, _2 C^1 _L (M_2, \mathcal{A}_2) \xrightarrow{_2 \partial^1} \, _2 C^2 _L (M_2, \mathcal{A}_2) \xrightarrow{_2 \partial^2} ... \qquad \qquad \quad
\end{multline}
If we delete all the trivial groups on the left, we can see the remaing part as a square where each side has infinite lenght (the new cohomology theory we are developing now will be then called 'square cohomology'). We want to form a cochain starting from these abelian groups: in order to do this, we need a bit of preparation. First of all, we also want to have other cochains; in particular, we want to define \footnote{Actually, we will consider any family of such homomorphisms, without fixing one of them for every case; this is because we prefer to be really general (indeed, as we will see below, some families give rise to double cochain complexes, while others do not; we will thus have more possibilities to choose the most appropriate and useful cohomology for the particular situation by maintaining this general fashion with the vertical coboundary maps). At least one such family always exists (the trivial one), as we will see later.} some coboundary maps $_n \widetilde{\partial}^p : \, _n C^p _L (M_n, \mathcal{A}_n) \rightarrow \, _{n+1} C^p _L (M_{n+1}, \mathcal{A}_{n+1})$ so that the following cochains give rise to other cohomology theories for each $n \geq 0$ (the cohomology groups are defined as usual; since we will not need them here, we will not explicitely write them):
\begin{equation}\label{Eq:2.2}
0 \rightarrow \, _0 C^n _L (M_0, \mathcal{A}_0) \xrightarrow{ _0 \widetilde{\partial}^0} \, _1 C^n _L (M_1, \mathcal{A}_1) \xrightarrow{ _0 \widetilde{\partial}^1} \, _2 C^n _L (M_2, \mathcal{A}_2) \xrightarrow{ _0 \widetilde{\partial}^2} ...
\end{equation}
We can write the following (which is not necessarily a double complex, because we will assume, more generally, \eqref{Eq:2.3}; see Section 2.1):
\begin{align*}
0 \rightarrow \, _0 C^0 _L (M_0, \mathcal{A}_0) \xrightarrow{ _0 \partial^0} \, _0 C^1 _L (M_0, \mathcal{A}_0) \xrightarrow{ _0 \partial^1} \, _0 C^2 _L (M_0, \mathcal{A}_0) \xrightarrow{ _0 \partial^2} ...\\
\downarrow \, _0 \widetilde{\partial}^0 \qquad \qquad \qquad \downarrow \, _0 \widetilde{\partial}^1 \qquad \qquad \qquad \downarrow \,_0 \widetilde{\partial}^2 \qquad \qquad\\
0 \rightarrow \, _1 C^0 _L (M_1, \mathcal{A}_1) \xrightarrow{_1 \partial^0} \, _1 C^1 _L (M_1, \mathcal{A}_1) \xrightarrow{_1 \partial^1} \, _1 C^2 _L (M_1, \mathcal{A}_1) \xrightarrow{_1 \partial^2} ...\\
\downarrow \, _1 \widetilde{\partial}^0 \qquad \qquad \qquad \downarrow \, _1 \widetilde{\partial}^1 \qquad \qquad \qquad \downarrow \,_1 \widetilde{\partial}^2 \qquad \qquad\\
0 \rightarrow \, _2 C^0 _L (M_2, \mathcal{A}_2) \xrightarrow{_2 \partial^0} \, _2 C^1 _L (M_2, \mathcal{A}_2) \xrightarrow{_2 \partial^1} \, _2 C^2 _L (M_2, \mathcal{A}_2) \xrightarrow{_2 \partial^2} ...\\
\downarrow \, _2 \widetilde{\partial}^0 \qquad \qquad \qquad \downarrow \, _2 \widetilde{\partial}^1 \qquad \qquad \qquad \downarrow \,_2 \widetilde{\partial}^2 \qquad \qquad \\
\, ... \qquad \qquad \qquad \qquad  \, ... \qquad \qquad \qquad \qquad \, ... \qquad \qquad \quad \,
\end{align*}
This sums up everything. We now define the notion of path 'connected by couples'. We will briefly indicate each $_n C^p _L (M_n, \mathcal{A}_n)$ by the couple $(n,p) \in \mathbb{N}^2 _0$ (the order matters). It is clear that there is a bijection between the set of all the abelian groups in the square above and all the couples in $\mathbb{N}^2 _0$. Consider a map $\pi_B \equiv \pi :B \subseteq \mathbb{N}^2 _0 \rightarrow B \setminus \lbrace(0,0) \rbrace$ which satisfies the following conditions (we require them for reasons that will be explained below):\\
(i) $ (0,0) \in B$;\\
(ii) $B=\Dom \pi$ is such that, whenever $(i,j) \in B$, one and only one element of $\lbrace (i+1,j), (i,j+1) \rbrace$ belongs to $B$;\\
(iii)
$$ \pi(i,j):= \begin{cases} (i+1,j), & \text{if}\ (i+1,j) \in B \\ (i,j+1), & \text{if}\ (i,j+1) \in B \  \end{cases}$$
Note that $B=\Ima \pi \, \cup \, \lbrace(0,0) \rbrace =\Dom \pi$. Such a map $\pi$ is called a path 'connected by couples'. The reason of (i) is that we want to start with the first abelian group given by Leech cohomology related to $M_0$ (this way, we do not start "too far", loosing some groups; note that, by this definition,
$$\lbrace i \in \mathbb{N}_0 : (i,\widehat{i})\in B \, \text{for some} \, \widehat{i} \in \mathbb{N}_0, \, j \in \mathbb{N}_0 : (\widehat{j},j)\in B \, \text{for some} \, \widehat{j} \in \mathbb{N}_0 \rbrace = \mathbb{N}_0$$
which means that we cover all $\mathbb{N}_0$ with these $i,j$'s). This formalise the idea of "not going too far"; indeed, we want to cover the whole $\mathbb{N}_0$). The reasons of (ii) and (iii) give rise to the name 'connectedness by couples', and can be explained with the following example (where we do not write the arrows which do not show the path):
\begin{align*}
0 \qquad \, \underline{_0 C^0 _L (M_0, \mathcal{A}_0)} \qquad \, _0 C^1 _L (M_0, \mathcal{A}_0) \qquad \, _0 C^2 _L (M_0, \mathcal{A}_0) \qquad ...\\
\downarrow \, _0 \widetilde{\partial}^0 \qquad \qquad \qquad \qquad \qquad \qquad \qquad \qquad \qquad \qquad\\
0 \qquad \, \underline{_1 C^0 _L (M_1, \mathcal{A}_1)} \xrightarrow{_1 \partial^0} \, \underline{_1 C^1 _L (M_1, \mathcal{A}_1)} \qquad \, _1 C^2 _L (M_1, \mathcal{A}_1) \qquad ...\\
\qquad \qquad \qquad \qquad \qquad \qquad \downarrow \, _1 \widetilde{\partial}^1 \qquad \qquad \qquad \qquad \qquad \qquad\\
0 \qquad \, _2 C^0 _L (M_2, \mathcal{A}_2) \qquad \, \underline{_2 C^1 _L (M_2, \mathcal{A}_2)} \xrightarrow{_2 \partial^1} \, \underline{_2 C^2 _L (M_2, \mathcal{A}_2)} \qquad ...
\end{align*}
Note that the underlined groups, which can be represented by couples as said above, are connected by the arrows of the coboundary maps $_n \partial^p$ and $_n \widetilde{\partial}^p$. The first values of the map $\pi$, in this case, are given by: $\pi(0,0)=(1,0)$, $\pi(1,0)=(1,1)$, $\pi(1,1)=(2,1)$, $\pi(2,1)=(2,2)$. It is clear that the map $\pi$ is connected by the couples through the arrows of the homomorphisms; from this we take the name (and also (ii) and (iii)). Note that we also consider $(0,0)$ among the couples (it belongs to $\Dom \pi = \Ima \pi \, \cup \, \lbrace (0,0) \rbrace$, and it is important to connect it to the path given by $\pi$). Now consider any path $\pi_B$ connected by couples. We will form a cochain using the groups corresponding to the couples in $\Ima \pi \, \cup \lbrace (0,0) \rbrace$. For instance, using the path $\pi$ of the previous example, we have:
$$ 0 \rightarrow \, _0 C^0 _L (M_0, \mathcal{A}_0) \xrightarrow{_0 \widetilde{\partial}^0} \, _1 C^0 _L (M_1, \mathcal{A}_1) \xrightarrow{_1 \partial^0} \, _1 C^1 _L (M_1, \mathcal{A}_1) \xrightarrow{_1 \widetilde{\partial}^1} \, _2 C^1 _L (M_2, \mathcal{A}_2) \xrightarrow{_2 \partial^1} ...$$
For this to be a cochain, we need to require (in general) that:
\begin{multline}\label{Eq:2.3}
\widetilde{\partial}(\pi(i,j)) \circ \partial(i,j) = 0 \quad \forall (i,j) \in B \, | \, \pi_1( \pi(i,j))=i+1\\
\partial(\pi(i,j)) \circ \widetilde{\partial}(i,j) = 0 \quad \forall (i,j) \in B \, | \, \pi_2( \pi(i,j))=j+1 \qquad \qquad \qquad \quad \, \, \, \,
\end{multline}
where here, for simplicity of notation, we have written $\partial(n,p):= \, _n \partial^p$ and $ \widetilde{\partial}(n,p):= \, _n \widetilde{\partial}^p$ (the order matters), and where $\pi_1$ and $\pi_2$ are the projections w.r.t. the first, second component, respectively (i.e. $\pi_1(a,b)=a$, $\pi_2(a,b)=b$). The identity $\partial(n,p+1) \circ \partial(n,p)=0$ is always satisfied, so there was no need to write it again; moreover, since we have consider cohomologies for the cochains $_{\bullet} C^n _L (M_{\bullet}, \mathcal{A}_{\bullet})$ ($n$ fixed), we have already required also that $\widetilde{\partial}(n+1,p) \circ \widetilde{\partial}(n,p)=0$ must hold $\forall n$ and for each fixed $p$; thus, even in this case we have not written it again. Notice that the cochains constructured using $\pi$ also depend on the chosen family $\lbrace \, _n \widetilde{\partial}^p \rbrace_{n,p \in \mathbb{N}_0}$ of homomorphisms (which are required to satisfy all the above equations). We also note that there always exists at least one family of homomorphisms satisfying all the above identities, that is, the one given by $_n \widetilde{\partial}^p \equiv 0$ $\forall n,p$ (of course, whenever possible, we prefer to consider non trivial families of homomorphisms). \textit{Even in the cases where we have the \emph{trivial family}} (which occur quite often), we still have some interesting cohomology theory. More precisely, we will only have two new kinds of cohomology groups, which will be called "trivial" since they come from the trivial vertical homomorphisms (actually, they are usually different from the "true" trivial group, namely $(0)$). These groups do not cause problems, because they are only needed  to go from a "floor" to another "floor" of the square or rectangle. Moreover, it is also possible (see below) to "avoid in the distance" these groups, so that their presence does not negatively affect the theory. Thus, we can always consider the trivial vertical homomorphisms, and in fact we will use them whenever needed (that is, whenever we want a simple and concrete example of family of vertical homomorphisms).\\
We now show some examples where we "avoid in the distance" the trivial homomorphisms. For instance, we could let $\pi(i,j):=(i+1,0)$ $\forall i$ (i.e. we have the same cohomology theory of Leech cohomology for $M_0$), which would avoid all the trivial homomorphisms, or, using some trivial homomorphisms (which lead to some $_n C^p _L$, $\ker \partial(i,j)$ as cohomology groups, see Proposition \ref{Prop:2.1} below; these are the groups which we called "trivial"), we could have something more interesting, like: 
$$\pi(i,j)=\begin{cases} (i,j+1), & \text{if \textit{i} is not prime}\  \\ (i+1,j), & \text{if \textit{i} is prime}\  \  \end{cases}$$
for which some cohomology groups are indeed "trivial", but they get farther and farther from each other when $i$ gets larger and larger. Another example, which is similar to the previous one but which avoids the "large" number of primes before, say, $30$ (we could choose any number, of course), is given by:
$$\pi(i,j)=\begin{cases} (i,j+1), & \text{if \textit{i} is not prime or if it is a prime}\ \leq 30   \\ (i+1,j), & \text{if \textit{i} is prime}\ >30 \  \end{cases}$$
Yet another example: we could also define $\pi$ so that it always goes on the right (i.e. $\pi(i,j):=(i,j+1)$) except for a finite number of times (i.e. $\pi(i,j):=(i+1,j)$ only for a finite number of $(i,j)$'s). This method to "avoid in the distance" the trivial groups is actually also useful when discussing local Eilenberg-Steenrod axioms, see Section 2.2.\\
We now prove the result announced above:
\begin{proposition}\label{Prop:2.1}
The "trivial groups" which are obtained from the trivial family of vertical homomorphisms are $_{i+1} C^j _L$ and $\ker \partial(i,j+1)$.
\end{proposition}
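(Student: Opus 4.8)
The plan is to unwind the $\pi$-cochain explicitly, impose the trivial vertical family $\, _n\widetilde{\partial}^p\equiv 0$, and then read off the cohomology at each spot of the resulting cochain by a short case analysis on the two steps of the path adjacent to that spot.

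First I would write the $\pi$-cochain out. By (i)--(iii) the path visits couples $c_0=(0,0),\,c_1=\pi(c_0),\,c_2=\pi(c_1),\dots$, and the associated cochain is
$$0 \rightarrow G_0 \xrightarrow{\delta_0} G_1 \xrightarrow{\delta_1} G_2 \xrightarrow{\delta_2} \cdots,\qquad G_k=\, _{n_k}C^{p_k}_L(M_{n_k},\mathcal{A}_{n_k})\ \text{for}\ c_k=(n_k,p_k),$$
where $\delta_k=\, _{n_k}\partial^{p_k}$ when $c_k\to c_{k+1}$ increases the second coordinate (a ``horizontal'' step) and $\delta_k=\, _{n_k}\widetilde{\partial}^{p_k}$ when it increases the first coordinate (a ``vertical'' step). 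Under the trivial family, $\delta_k$ is the zero map at every vertical step and a genuine Leech coboundary $\partial(n_k,p_k)$ at every horizontal step.

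Next I would compute $H^k=\ker\delta_k/\Ima\delta_{k-1}$ (with $\delta_{-1}=0$) by distinguishing the type of the step into $c_k$ and the type of the step out of $c_k$. If both are horizontal, then $c_{k-1}=(n,p-1)$, $c_k=(n,p)$, $c_{k+1}=(n,p+1)$ and $H^k=\ker\partial(n,p)/\Ima\partial(n,p-1)=H^p_L(M_n,\mathcal{A}_n)$, an honest Leech cohomology group. If the step into $c_k$ is vertical and the step out is horizontal, then $\delta_{k-1}=0$ and $H^k=\ker\delta_k=\ker\partial(n_k,p_k)$, a group of the kind $\ker\partial(i,j)$ appearing in the statement. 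If both steps are vertical, then writing $c_{k-1}=(i,j)$ forces $c_k=(i+1,j)$, both $\delta_{k-1}$ and $\delta_k$ vanish, and $H^k=G_k=\, _{i+1}C^j_L(M_{i+1},\mathcal{A}_{i+1})$, the other kind named in the statement. I would then dispose of the edge term $G_0$ (where the incoming ``map'' $\delta_{-1}=0$ reproduces the same two patterns), note that since the path is infinite these configurations genuinely occur, and remark that the label ``trivial'' refers to their origin, not their size: both $\, _{i+1}C^j_L$ and $\ker\partial(i,j)$ are typically far from the zero group, yet each lives on a single Leech cochain level and is produced purely by the vanishing of a vertical map.

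The step that needs the most care is the remaining configuration — a horizontal step into $c_k$ followed by a vertical step out of it — which the example of this section (the alternating ``down/right'' path) shows really does occur: there $\delta_k=0$ but $\delta_{k-1}=\partial(n,p-1)$ need not vanish, so $H^k=\, _nC^p_L/\Ima\partial(n,p-1)$. The hard part of the argument is to reconcile this with the statement, i.e. to show it should be regarded as falling under the family $\, _{i+1}C^j_L$ (being a quotient of such a group by the image of an adjacent Leech coboundary), or else to restrict to the classes of paths for which it does not arise; doing this cleanly, together with keeping the index bookkeeping consistent (kernel of the outgoing map, image of the incoming one, and the shift to $(i+1,j)$ in the double-vertical case), is where the real work lies.
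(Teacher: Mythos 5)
Your argument is the same local computation the paper performs --- identify the two coboundaries adjacent to a spot of the $\pi$-cochain, use that the trivial vertical maps have image $\{0\}$ and kernel everything, and then $G/\{0\}\cong G$ --- but you carry it out for all four in/out configurations, whereas the paper's proof only examines the spot at the \emph{head} of a trivial vertical arrow (incoming map $\widetilde{\partial}(i,j)\equiv 0$, outgoing map either vertical or horizontal), which is exactly how it arrives at its two families ${}_{i+1}C^{j}_{L}$ and $\ker\partial(\cdot,\cdot)$. The configuration you flag as the hard part --- horizontal in, vertical out, giving ${}_{n}C^{p}_{L}/\Ima\partial(n,p-1)$ --- is genuinely absent from the paper's proof, and it does occur: in the paper's own example path $\pi(0,0)=(1,0)$, $\pi(1,0)=(1,1)$, $\pi(1,1)=(2,1),\dots$, the spot ${}_{1}C^{1}_{L}(M_1,\mathcal{A}_1)$ has incoming map $\partial(1,0)$ and outgoing map $\widetilde{\partial}(1,1)\equiv 0$, so its cohomology is ${}_{1}C^{1}_{L}/\Ima\partial(1,0)$, which is neither a full cochain group nor a kernel of a Leech coboundary. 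Any path that ever turns from a horizontal step into a vertical one produces such a cokernel-type group, so no admissible restriction on paths excludes it.

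So you have not misread anything: read literally, the proposition omits this third family, and the honest fix is to list it alongside the other two rather than to force it into ${}_{i+1}C^{j}_{L}$ (of which it is a proper quotient in general). Two smaller points. First, your index bookkeeping in the vertical-in/horizontal-out case is the correct one: the kernel involved is that of the horizontal coboundary at the head of the vertical arrow, i.e.\ $\ker\partial(i+1,j)$ in the paper's $(n,p)$ notation, so the proposition's $\ker\partial(i,j+1)$ is an index slip (the paper's own example yields $\ker\partial(1,0)$, not $\ker\partial(0,1)$). Second, your remark that ``these configurations genuinely occur'' should be softened: which of the four occur depends on the chosen $\pi$ (a path that only descends once and then stays on one floor never produces the double-vertical case), but this is irrelevant to the proposition, which only lists the kinds of groups that \emph{can} arise.
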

\begin{proof}
If $\widetilde{\partial}(i,j) \equiv 0$, then we could have two cases (or more if we generalise the notion of path connected by couples; see Remark \ref{Rm:2.1}): the next homomorphism is either $\widetilde{\partial}(\pi(i,j))$ (which is thus equal to $\widetilde{\partial}(i+1,j)$) or $\partial(\pi(i,j))$ (which is thus equal to $\partial(i,j+1)$). In the former case, $\ker \widetilde{\partial}(\pi(i,j)) / \Ima \widetilde{\partial}(i,j) \equiv \, _{i+1} C^j _L / \lbrace 0 \rbrace$; in the latter case, $\ker \partial(\pi(i,j)) / \Ima \widetilde{\partial}(i,j) \equiv \ker \partial(i,j+1) / \lbrace 0 \rbrace$. It is well known that, if $G$ is a group, then $G \, / \, \lbrace 0 \rbrace$ is isomorphic to $G$ (for instance, consider the identity map and use the first Isomorphism Theorem). Thus, we obtain the cohomology groups $_{i+1} C^j _L$, $\ker \partial(i,j+1)$, respectively.
\end{proof}
We conclude by defining explicitely the cohomology groups. Here, with $d^p$ we will indicate either $\partial(n,p)$ or $\widetilde{\partial}(n,p)$, depending on the chosen $\pi_B$:
\begin{equation}\label{Eq:2.4}
H^p _S(\lbrace M_n \rbrace_{n \in \mathbb{N}_0}):=H^p(\lbrace M_n \rbrace_{n \in \mathbb{N}_0}, \lbrace \mathcal{A}_n \rbrace_{n \in \mathbb{N}_0}, \pi_B, \lbrace \widetilde{\partial}(p,j) \rbrace_{p,j \in \mathbb{N}_0})= \ker d^{p+1} / \Ima d^p
\end{equation}
This cohomology theory is called 'square cohomology' since we used the 'infinite square' $\mathbb{N}^2 _0$ (the $S$ in $H^p _S$ clearly comes from this name). We can sum up everything in the following important Theorem:
\begin{theorem}[Square Cohomology - Infinite Case] \label{Thm:2.1}
Let $\lbrace M_n \rbrace_{n \in \mathbb{N}_0}$ be a sequence of monoids such that $M_i \neq M_j$ $\forall i \neq j$. Let $_n C^{\bullet} _L (M_n, \mathcal{A}_n)$ denote the cochain given by Leech cohomology for each $M_n$ with coefficients in $\mathcal{A}_n$. Consider some path $\pi_B : B \subseteq \mathbb{N}^2 _0 \rightarrow B \setminus \lbrace (0,0) \rbrace$ that is connected by couples. Let $_n \widetilde{\partial}^p : \, _n C^p _L (M_n, \mathcal{A}_n) \rightarrow \, _{n+1} C^p _L (M_{n+1}, \mathcal{A}_{n+1})$ be homomorphisms such that $_{n+1} \widetilde{\partial}^p \circ \, _n \widetilde{\partial}^p = 0$ $\forall p$ and for each fixed $n \geq 0$. Suppose that this family of homomorphisms and the homomorphisms given by Leech cohomology satisfy \eqref{Eq:2.3} (at least the trivial family $_n \widetilde{\partial}^p \equiv 0$ satisfy all these hypothesis). Then, we can form a cochain consisting of $_0 C^0 _L(M_0, \mathcal{A}_0)$ and all the abelian groups given by the corresponding couples of $\pi_B$ (where we follow the order of the path described by $\pi_B$, as shown above) and we then have a cohomology theory, called square cohomology, associated to the sequence $\lbrace M_n \rbrace_{n \in \mathbb{N}_0}$. The cohomology groups $H^p _S(\lbrace M_n \rbrace_{n \in \mathbb{N}_0})$ are defined by \eqref{Eq:2.4}.
\end{theorem}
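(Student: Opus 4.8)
The plan is to read this as a packaging statement: the real work has been arranged in the set-up of $\pi_B$ and of the vertical maps $_n\widetilde{\partial}^p$, and what remains is to check that the groups and maps selected along the path form a genuine cochain complex, after which \eqref{Eq:2.4} defines cohomology in the standard way of Section 1.1. Concretely, using (i)--(iii) I would put $(a_0,b_0):=(0,0)$ and $(a_{k+1},b_{k+1}):=\pi(a_k,b_k)$ for $k\geq 0$; by (ii) exactly one of $(a_k+1,b_k),(a_k,b_k+1)$ lies in $B$, so the recursion is well defined and never terminates, producing an infinite sequence of couples with $B=\{(a_k,b_k):k\geq 0\}$. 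Set $C^k:={}_{a_k}C^{b_k}_L(M_{a_k},\mathcal{A}_{a_k})$, with $C^{-1}:=0$, and let $d^k\colon C^k\to C^{k+1}$ be $\partial(a_k,b_k)={}_{a_k}\partial^{b_k}$ when $(a_{k+1},b_{k+1})=(a_k,b_k+1)$ (a ``right'' step) and $\widetilde{\partial}(a_k,b_k)={}_{a_k}\widetilde{\partial}^{b_k}$ when $(a_{k+1},b_{k+1})=(a_k+1,b_k)$ (a ``down'' step). In either case the source of $d^k$ is $C^k$ and the target is $C^{k+1}$, so $0\to C^0\xrightarrow{d^0}C^1\xrightarrow{d^1}C^2\to\cdots$ is the sequence of composable homomorphisms shown in the worked example before \eqref{Eq:2.3}.

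The heart of the proof is the identity $d^{k+1}\circ d^k=0$ for all $k\geq 0$, via a four-case analysis on the pattern of steps $k$ and $k+1$. \emph{Right then right:} $d^{k+1}d^k={}_{a_k}\partial^{b_k+1}\circ{}_{a_k}\partial^{b_k}=0$, the Leech cochain identity $\partial^{p+1}\partial^p=0$ underlying \eqref{Eq:1.3}. \emph{Down then down:} $d^{k+1}d^k={}_{a_k+1}\widetilde{\partial}^{b_k}\circ{}_{a_k}\widetilde{\partial}^{b_k}=0$ by the hypothesis $_{n+1}\widetilde{\partial}^p\circ{}_n\widetilde{\partial}^p=0$ (take $n=a_k$, $p=b_k$). \emph{Right then down:} then $(a_{k+1},b_{k+1})=\pi(a_k,b_k)$ with $\pi_2(\pi(a_k,b_k))=b_k+1$, and $d^{k+1}d^k=\widetilde{\partial}(\pi(a_k,b_k))\circ\partial(a_k,b_k)$, one of the two mixed identities imposed in \eqref{Eq:2.3}. \emph{Down then right:} symmetrically $d^{k+1}d^k=\partial(\pi(a_k,b_k))\circ\widetilde{\partial}(a_k,b_k)$, the other identity of \eqref{Eq:2.3}. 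Hence $\Ima d^p\subseteq\ker d^{p+1}$ for every $p$, the quotients $H^p_S:=\ker d^{p+1}/\Ima d^p$ of \eqref{Eq:2.4} are well defined, and $(C^\bullet,d^\bullet)$ with these groups is a cohomology theory attached to $\{M_n\}_{n\in\mathbb{N}_0}$. I would close by noting that the trivial family $_n\widetilde{\partial}^p\equiv 0$ meets all hypotheses --- the ``down then down'' composites vanish automatically and both identities of \eqref{Eq:2.3} collapse to $0=0$ --- producing exactly the ``trivial'' cohomology groups of Proposition \ref{Prop:2.1}; the covering property of $\pi_B$ remarked after (iii) plays no role in exactness but is what justifies the name ``square cohomology''.

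I do not expect a genuine obstacle: the statement is a bookkeeping consequence of \eqref{Eq:2.3} together with the Leech and vertical identities. The one point that rewards care is the type-compatibility in the two ``corner'' cases --- confirming that at a right-then-down corner the second map is $_{a_k}\widetilde{\partial}^{b_k+1}$, so that it post-composes with $_{a_k}\partial^{b_k}$, and that at a down-then-right corner the second map is $_{a_k+1}\partial^{b_k}$ --- which is precisely the pair of situations that \eqref{Eq:2.3} was written to cover; everything else is routine verification.
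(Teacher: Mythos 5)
Your proposal is correct and follows essentially the same route as the paper, whose ``proof'' of Theorem \ref{Thm:2.1} is just the discussion preceding it (horizontal composites vanish by the Leech identity, vertical ones by hypothesis, and the two mixed corner cases by \eqref{Eq:2.3}); your contribution is merely to organize this as an explicit recursion $(a_{k+1},b_{k+1})=\pi(a_k,b_k)$ and a clean four-case check of $d^{k+1}\circc d^k=0$. One small remark: in matching the corner cases you tacitly use the reading of \eqref{Eq:2.3} under which the side conditions $\pi_1(\pi(i,j))=i+1$ and $\pi_2(\pi(i,j))=j+1$ are attached so that the composites actually typecheck (right-then-down pairs with $\pi_2(\pi(i,j))=j+1$, down-then-right with $\pi_1(\pi(i,j))=i+1$), which appears to be the intended meaning even though the displayed equation pairs them the other way.
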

We now give a finite version of this result. It is clear that the previous discussion still holds with $\lbrace M_n \rbrace_{n=0}^{k}$, $k \neq \infty$. The only problem to fix is that, this time, if we reach some group on the last line (i.e. if we reach some $_k C^n _L$), we cannot obviously go beyond it (because we do not have a monoid $M_{k+1}$ that gives us the cochain of its Leech cohomology). Thus, we need to slightly change the definition of $\pi_B$ connected by couples, because we cannot have $\pi(k,j)=(k+1,j)$, so add the condition that $\pi(k,j):=(k,j+1)$ $\forall j$. Everything else can be easily reduced to the finite case, obtaining the following (since this time we have $\lbrace 0, 1, ..., k \rbrace \times \mathbb{N}_0$, this will be called rectangular cohomology):
\begin{theorem}[Rectangular Cohomology - Finite Case]\label{Thm:2.2}
Let $\lbrace M_n \rbrace_{n=0}^{k}$ consist of monoids such that $M_i \neq M_j$ $\forall i \neq j$. Let $_n C^{\bullet} _L (M_n, \mathcal{A}_n)$ denote the cochain given by Leech cohomology for each $M_n$ with coefficients in $\mathcal{A}_n$. Consider some path $\pi_B : B \subseteq \mathbb{N}^2 _0 \rightarrow B \setminus \lbrace (0,0) \rbrace$ that is connected by couples. Let $_n \widetilde{\partial}^p : \, _n C^p _L (M_n, \mathcal{A}_n) \rightarrow \, _{n+1} C^p _L (M_{n+1}, \mathcal{A}_{n+1})$ be homomorphisms such that $_{n+1} \widetilde{\partial}^p \circ \, _n \widetilde{\partial}^p = 0$ $\forall p$ and for each fixed $0 \leq n \leq k$. Suppose that this family of homomorphisms and the homomorphisms given by Leech cohomology satisfy \eqref{Eq:2.3} (at least the trivial family $_n \widetilde{\partial}^p \equiv 0$ satisfy all these hypothesis). Then, we can form a cochain consisting of $_0 C^0 _L(M_0, \mathcal{A}_0)$ and all the abelian groups given by the corresponding couples of $\pi_B$ (where we follow the order of the path described by $\pi_B$, as shown above) and we then have a cohomology theory, called rectangular cohomology, associated to $\lbrace M_n \rbrace_{n=0}^{k}$. The cohomology groups $H^p _R(\lbrace M_n \rbrace_{n=0}^{k})$ are defined by \eqref{Eq:2.4} (change $S$ with $R$ and $n \in \mathbb{N}_0$ with $n=0, 1, ..., k$).
\end{theorem}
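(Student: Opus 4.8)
The plan is to transcribe the argument for Theorem \ref{Thm:2.1}, isolating the one place where infiniteness of the sequence was used and checking that the amended definition of $\pi_B$ repairs it. First I would record the modified notion of path connected by couples: keep conditions (i), (ii), (iii) for every $(i,j)\in B$ with $i<k$, and add the stipulation $\pi(k,j):=(k,j+1)$ for all $j$. One checks immediately that this is still a well-defined map $B\to B\setminus\{(0,0)\}$ with $(0,0)\in B$; that by (ii) the path, once it meets the bottom row, contains the whole half-line $\{k\}\times\mathbb{N}_0$ and thereafter travels horizontally forever; and that the two coordinate projections of $B$ are $\{0,1,\dots,k\}$ and $\mathbb{N}_0$ respectively, so $\pi_B$ still "does not go too far" but now has bounded first coordinate, which is exactly the finite analogue of the covering property stated in the infinite case.

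Next I would build the cochain by following $\pi_B$ from $(0,0)$: to the couple $(n,p)$ attach the group ${}_nC^p_L(M_n,\mathcal{A}_n)$, and for the outgoing arrow take ${}_n\partial^p$ when $\pi(n,p)=(n,p+1)$ and ${}_n\widetilde\partial^p$ when $\pi(n,p)=(n+1,p)$. By the amended definition the vertical option never occurs at $n=k$, so every group the path names actually exists — this is the only point at which the finite case differs from Theorem \ref{Thm:2.1}, and it is immediate from $\pi(k,j)=(k,j+1)$ together with (ii). The result is a sequence
\begin{equation*}
0 \longrightarrow {}_0C^0_L(M_0,\mathcal{A}_0) \xrightarrow{d^0} D^1 \xrightarrow{d^1} D^2 \xrightarrow{d^2} \cdots,
\end{equation*}
where $D^p$ denotes the group at the $p$-th couple on the path and $d^p$ the associated horizontal or vertical coboundary map; it is still infinite on the right, since $\{k\}\times\mathbb{N}_0$ is.

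Then I would verify $d^{p+1}\circ d^p=0$ for every $p\geq 0$ by the four cases dictated by the types of two consecutive arrows of the path: horizontal--horizontal gives ${}_n\partial^{q+1}\circ{}_n\partial^q=0$, the Leech relation already noted in Section 1.1; vertical--vertical gives ${}_{n+1}\widetilde\partial^q\circ{}_n\widetilde\partial^q=0$, which is a hypothesis of the theorem; and the two mixed cases vanish by the two clauses of \eqref{Eq:2.3}. Connectedness by couples guarantees that no other configuration can arise, so $(D^\bullet,d^\bullet)$ is a genuine cochain complex and the quotients $\ker d^{p+1}/\Ima d^p$ in \eqref{Eq:2.4} (with $R$ in place of $S$) are defined; as in the infinite case the trivial family ${}_n\widetilde\partial^p\equiv 0$ satisfies every hypothesis, so the construction is never vacuous. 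I expect no real obstacle here: the entire content is the bookkeeping observation that the capped path stays within rows $0$ through $k$, and everything else is an unchanged copy of the proof of Theorem \ref{Thm:2.1}.
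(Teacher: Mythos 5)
Your proposal is correct and follows essentially the same route as the paper, which likewise reduces the finite case to the infinite one by imposing $\pi(k,j):=(k,j+1)$ for all $j$ and observing that everything else carries over; you are in fact more explicit than the paper in verifying the four cases of $d^{p+1}\circ d^p=0$. The only caveat is your side remark that the first coordinate projection of $B$ equals $\{0,1,\dots,k\}$ — a path may never descend to row $k$ (e.g.\ one that eventually only moves right), but this does not affect the construction or the conclusion.
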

As we wanted, in these cohomologies we mainly have cohomology groups which are the same as some groups of the cohomologies on the various floors. In addition, we only have the "trivial groups" which allow us to go from a floor to another one. Thus, we have succesfully encoded as much information as possible (choosing a proper path).
\begin{remark}\label{Rm:2.1}
\normalfont We also note that we defined $\pi_B$ in such a way that it is "weakly decreasing", meaning that it always goes either on the same cochain or it goes on the cochain below it. Of course, the situation can be easily generalised so that the path, still assumed to be connected by couples, is allowed to go to a cochain above (whenever possible; when we reach some group $_0 C^n _L$, we cannot clearly go above it). To do this, we will also need a refinement of equation \ref{Eq:2.3}, which is simple to obtain, following the reasoning preceding Theorem \ref{Thm:2.1}.\\
Similarly, we can also think of going to the left, or going diagonal, ... All of these requirements can be easily added with some minor and simple modifications, as in the case of 'going up', described above. We conclude saying that the method of the square (or rectangular) cohomology can obviously be applied also in other situations where we have some sequence and we can find some (also different from each other) cohomology theories for each element of the sequence (for instance, to the first term we can associate sheaf cohomology, to the second term we can associate de Rham cohomology, ...). The same applies for homology theories, of course. Another possible generalisation includes the possibility of associating, to some elements, cohomology theories, and associating homology theories to other elements (clearly, we will have the horizontal arrows which go on the right for cohomologies, and on the left for homologies (or viceversa; see Section 1.1)). It is also because of all these possible generalisations that we prefer to maintain a certain level of generality in the choice of the vertical homomorphisms. Note that the trivial ones, which as already said always work properly, allow us to define these generalisations without difficulty.
\end{remark}
The infinite case will be applied to $f_s$, while the finite case will be applied to both $f_s$ and $h$ in the next sections. In Section 2.2 we will discuss Eilenberg-Steenrod axioms for square and rectangular (co)homology.
\subsection{Double complexes and square/rectangular cohomologies}
The construction of the square (or, in the finite case, of the rectangle) is really similar to a double complex (for more details on double complexes, see [36-39]). However, our cohomology theories are more general, thanks to \eqref{Eq:2.3}. If our construction turns out to be a double complex \footnote{For example, if we consider the trivial family of homomorphisms $_i \widetilde{\partial}^j$, we always have a double complex.} (which is not necessarily the case, because we do not require (and \eqref{Eq:2.3} does not imply) that every square in the diagram commutes), we can construct cohomology groups $H^{p,q}$ in a similar way to $H^p$. This cohomology theory clearly encodes everything about the groups (and not 'as much as possible', as for square/rectangular cohomology) and it is a valid alternative to square/rectangular cohomology. Since in this paper we are mainly dealing with cochain complexes (not double cochain complexes), we would also like to have a cochain with a corresponding cohomology theory derived from the double cochain above. This can be done by using \footnote{There are also other ways to obtain cochain complexes from double complexes; see, for instance, [42].} the total complex (w.r.t. sum, in this case):
\begin{equation}\label{Eq:2.5}
\Tot(C)_n:=\bigoplus_{p+q=n} C^{p,q}
\end{equation}
where $\Tot(C)_n$ is the $n$-th (excluding $(0)$) term in the new cochain complex, and $C^{\bullet,\bullet}:=(C^{p,q})_{p,q \in \mathbb{N}_0}$ is the double complex. The coboundary maps are defined as a certain linear combination of the horizontal and vertical coboundary maps. We refer to Section 1.2 of [52] for more details. We also note that a useful method to compute the cohomology groups of the total complex is to use a certain spectral sequence, namely the spectral sequence of a double complex; see Chapter III in [25], [38] and [40-41] for more on these topics.\\
In such cases, we can choose the most useful (which depends on what we want to do with it) cohomology theory for a sequence of objects $\lbrace A_j \rbrace$. Hence, in some particular situations we can also consider other cohomology theories for structured spaces (see Sections 3 and 4).\\
We conclude noting that, if the square/rectangle is actually a double complex, the salamander lemma (see [39]) uses a similar idea to our "following the path $\pi$". However, this lemma gives some exact sequences, and it is applicable with double complexes, while our method of the path does not require that squares commute and does not generate, usually, an exact sequence, but a cochain complex. Thus, our method works in more general situations, even though in some cases (and when it is possible) it could be more useful to consider a double complex and the associated (via \eqref{Eq:2.5}) total complex instead of square/rectangular cohomology. Summarising, we have followed these steps:\\
1) We wanted a cohomology theory associated to a sequence, so we started choosing a cohomology theory for each element of the sequence (Leech cohomology, which will be used in this paper, but also other (co)homologies, as explained in Remark \ref{Rm:2.1});\\
2) We then had two possibilities: the square/rectangle obtained could either be a double cochain complex or not. In the latter case, we use square/rectangular cohomology, where we follow a path $\pi$, while in the former we can choose among square/rectangular cohomology, the cohomology associated to the double complex, and the cohomology given by the total cochain. This choice is made depending on what we want to do with the cohomology theory.
\subsection{Local Eilenberg-Steenrod axioms}
In this subsection we show that Eilenberg-Steenrod axioms locally hold for square and rectangular (co)homologies (recall that homologies can be obtained as described in Remark \ref{Rm:2.1}). We start recalling the five ES axioms for homology (see also [46-50]):\\
\\
(Homotopy) Homotopic maps induce the same map in homology;\\
(Excision) If $(X,A)$ is a pair and $U$ is an open subset of $X$ whose closure is contained in the interior of $A$, the inclusion map $i:(X \setminus U, A \setminus U) \rightarrow (X,A)$ induces an isomorphism in homology;\\
(Exactness) Each pair $(X,A)$ induces a long exact sequence in homology:
$$ ... \rightarrow H_j(A) \xrightarrow{i_*} H_j(X) \xrightarrow{t_*} H_j(X,A) \xrightarrow{\partial} H_{j-1}(A) \rightarrow ... $$
where $i:A \rightarrow X$ and $t: X \rightarrow (X,A)$ are the inclusion maps;\\
(Additivity) The homology of a disjoint union is the direct sum of the homologies, that is:
$$ H_j( \coprod_i X_i) \cong \bigoplus_i H_j(X_i) $$
(Dimension) $H_n(\lbrace \pt \rbrace)=0$ $\forall n \neq 0$, where $\lbrace \pt \rbrace$ is any singleton set\\
\\
where the $H_n$'s are functors from the category of pairs $(X, A)$ of topological spaces to \textbf{Ab}, together with a natural transformation $\partial : H_j(X, A) \rightarrow H_{j-1}(X, \emptyset)=:H_j(X)$. Via the usual "dualisation" process, we get the axioms for cohomology. If some of the above axioms do not hold, we obtain more general (co)homology theories. For example, if the dimension axiom is not satisfied (as for K-theories, ...), we have a generalised (co)homology theory (see, for instance, [48]). Moreover, some (co)homology theories satisfy some "slightly different" ES axioms (an example is given by [51]). All these possibilities will still hold, "locally", in the square/rectangular (co)homology constructed starting from them, as we will now show.\\
Roughly speaking, when we construct a square (or rectangle) which gives us square (or rectangular) (co)homology, if the (co)homology theory at some line $n$ satisfies the ES axioms (or it satisfies some of them, or some slightly different axioms), then, if the chosen path $\pi$ includes enough groups of this theory with continuity (i.e. without changing the "floor"), then the (co)homology groups will be the same for both the theory itself and for the square (or rectangular) (co)homology. Thus, those groups will still satisfy the axioms. This can be generalised and formalised, as explained below.\\
Given a square (or rectangle) of some square (or rectangular) (co)homology, a local (co)chain is any continuous sequence (where we follow the directions of the arrows) of abelian groups belonging to the chosen path $\pi$, where the term 'continuous' means that 'every groups in the (co)chain is connected to other ones via a horizontal or vertical homomorphism'. For instance, if this is the square of the square cohomology with Leech groups as in Section 2:
\begin{align*}
0 \rightarrow \, _0 C^0 _L (M_0, \mathcal{A}_0) \xrightarrow{ _0 \partial^0} \, _0 C^1 _L (M_0, \mathcal{A}_0) \xrightarrow{ _0 \partial^1} \, _0 C^2 _L (M_0, \mathcal{A}_0) \xrightarrow{ _0 \partial^2} ...\\
\downarrow \, _0 \widetilde{\partial}^0 \qquad \qquad \qquad \downarrow \, _0 \widetilde{\partial}^1 \qquad \qquad \qquad \downarrow \,_0 \widetilde{\partial}^2 \qquad \qquad\\
0 \rightarrow \, _1 C^0 _L (M_1, \mathcal{A}_1) \xrightarrow{_1 \partial^0} \, _1 C^1 _L (M_1, \mathcal{A}_1) \xrightarrow{_1 \partial^1} \, _1 C^2 _L (M_1, \mathcal{A}_1) \xrightarrow{_1 \partial^2} ...\\
\downarrow \, _1 \widetilde{\partial}^0 \qquad \qquad \qquad \downarrow \, _1 \widetilde{\partial}^1 \qquad \qquad \qquad \downarrow \,_1 \widetilde{\partial}^2 \qquad \qquad\\
0 \rightarrow \, _2 C^0 _L (M_2, \mathcal{A}_2) \xrightarrow{_2 \partial^0} \, _2 C^1 _L (M_2, \mathcal{A}_2) \xrightarrow{_2 \partial^1} \, _2 C^2 _L (M_2, \mathcal{A}_2) \xrightarrow{_2 \partial^2} ...\\
\downarrow \, _2 \widetilde{\partial}^0 \qquad \qquad \qquad \downarrow \, _2 \widetilde{\partial}^1 \qquad \qquad \qquad \downarrow \,_2 \widetilde{\partial}^2 \qquad \qquad \\
\, ... \qquad \qquad \qquad \qquad  \, ... \qquad \qquad \qquad \qquad \, ... \qquad \qquad \quad \,
\end{align*}
then the following one \footnote{In all these examples, we tacitly assume that the chosen path $\pi$ includes the considered groups in its image.} is a local cochain:
$$ _1 C^0 _L (M_1, \mathcal{A}_1) \xrightarrow{_1 \widetilde{\partial}^0} \, _2 C^0 _L (M_2, \mathcal{A}_2) \xrightarrow{_2 \partial^0} \, _2 C^1 _L (M_2, \mathcal{A}_2) $$
while
$$ _1 C^0 _L (M_1, \mathcal{A}_1) \rightarrow \, _2 C^1 _L (M_2, \mathcal{A}_2) $$
is not a local cochain, because there is no horizontal or vertical homomorphism connecting them (composition of vertical and/or horizontal morphisms is not taken into account in this definition). Of course, it is possible to add some elements and obtain a local cochain (for instance, the previous one can be seen as a "completion" of this one). A horizontal (or vertical) local (co)chain is a local (co)chain whose elements are on the same "floor" (or on the same column). For instance, the following one is a horizontal local cochain:
$$_2 C^0 _L (M_2, \mathcal{A}_2) \xrightarrow{_2 \partial^0} \, _2 C^1 _L (M_2, \mathcal{A}_2) \xrightarrow{_2 \partial^1} \, _2 C^2 _L (M_2, \mathcal{A}_2)$$
and this one is a vertical local cochain:
$$ _0 C^0 _L (M_0, \mathcal{A}_0) \xrightarrow{_0 \widetilde{\partial}^0} \, _1 C^0 _L (M_1, \mathcal{A}_1) $$
The initial element of a local (co)chain is the first element of such sequence. Similarly, the final element of a local (co)chain is the last element of the sequence. For instance, in the example of the local horizontal cochain above, $_2 C^0 _L (M_2, \mathcal{A}_2)$ is the initial element, while $_2 C^2 _L (M_2, \mathcal{A}_2)$ is the final element. After fixing some path $\pi$, we say that a (co)homology group is an extremal group if it is $\cong \ker d_p / \Ima d_{p+1}$ ($\cong \ker d^{p+1} / \Ima d^{p}$, respectively) for $d_p$ horizontal/vertical and $d_{p+1}$ vertical/horizontal homomorphism (similarly for cohomology) \footnote{Of course, the maps $d_p$ ($d^p$) are the ones obtained following the fixed path $\pi$, that is, they are the ones of the square (or rectangular) (co)homology w.r.t. $\pi$.}. This implies that non-extremal groups are the same as some (co)homology groups of the (co)homology theory on a certain floor. These notions allow us to formalise the meaning of "local axioms" \footnote{The term 'local' comes from these notions.}. Actually, this will be only needed for exactness; the other axioms are easier to treat. We now prove the following important result, which states what happens in general when we consider square or rectangular (co)homologies with (co)homology theories satisfying the ES axioms.
\begin{theorem}[Local ES axioms for square/rectangular (co)homology]\label{Thm:2.3}
Consider a square or rectangular (co)homology (w.r.t. some fixed path $\pi$) where all the (co)homology theories on each floor satisfy the ES axioms. Then, the square or rectangular (co)homology satisfies the 'local Eilenberg-Steenrod axioms', that is, the following local version of the ES axioms:\\
(Homotopy) If $H_i$ ($H^i$) is a non-extremal (co)homology square (or rectangular) group, then: homotopic maps induce the same map in such a (co)homology \footnote{That is, the induced maps are certainly the same when we consider non-extremal (co)homology groups.}; more briefly, we say that Homotopy holds locally;\\
(Excision) Excision holds locally, that is, with non-extremal groups;\\
(Additivity) Additivity holds locally, that is, with non-extremal groups;\\
(Dimension) The dimension axiom holds locally, that is, with non-extremal groups;\\
(Exactness) Every local horizontal (co)chain induces an exact sequence of (co)homology groups, where the sequence is considered without the extremal (co)homology groups (which are in fact deleted from it).
\end{theorem}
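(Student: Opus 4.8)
\emph{Proof proposal.} The plan rests on the dichotomy recorded just before the statement: along the fixed path $\pi$, a square (or rectangular) cohomology group $H^p$ is either \emph{extremal} --- framed by one horizontal Leech coboundary ${}_n\partial^\bullet$ and one vertical map ${}_n\widetilde{\partial}^\bullet$, hence by Proposition \ref{Prop:2.1} one of the ``trivial'' groups ${}_{i+1}C^j_L$ or $\ker\partial(i,j+1)$ --- or \emph{non-extremal}, in which case it coincides with a genuine cohomology group of the cohomology theory on one of the floors. When $\pi$ crosses floor $n$ horizontally at step $p$ in internal degree $q$ (with the convention ${}_n\partial^{-1}=0$, which covers the initial group ${}_0C^0_L(M_0,\mathcal{A}_0)$), this identification is the evident one $H^p\cong H^q({}_nC^\bullet_L(M_n,\mathcal{A}_n))$, and it is natural in the pair $(X,A)$ since the vertical maps are natural and \eqref{Eq:2.3} holds; thus the whole square/rectangular construction is functorial in $(X,A)$ and, on every non-extremal degree, restricts to the floor-$n$ functor.

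First I would dispatch the four ``pointwise'' axioms --- Homotopy, Excision, Additivity, Dimension --- in one stroke. Each concerns a single cohomology functor in a single degree, with no interaction between distinct degrees, so for a non-extremal $H^p$ the axiom is obtained simply by transporting the corresponding statement for $H^q({}_nC^\bullet_L(M_n,\mathcal{A}_n))$ --- which holds by hypothesis, the floor-$n$ theory being an ES theory --- across the natural isomorphism above. Homology is identical (cf.\ Remark \ref{Rm:2.1}). The only care needed is to check that this isomorphism is natural enough to carry induced maps, direct-sum decompositions, and the vanishing in the Dimension axiom, which is routine.

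The substantive step is local Exactness. A local horizontal cochain is a continuous sub-sequence of $\pi$ lying on a single floor $n$, i.e.\ a segment ${}_nC^{q_0}_L\to\cdots\to{}_nC^{q_1}_L$ of the Leech cochain of $M_n$; since the floor-$n$ theory is an ES theory, a pair $(X,A)$ yields a short exact sequence of such cochain complexes, hence, in the degree range fixed by the local cochain, a segment of the long exact sequence
$$
\cdots\to H^{j}(A)\to H^{j+1}(X,A)\to H^{j+1}(X)\to H^{j+1}(A)\to\cdots .
$$
At every \emph{interior} degree of the local cochain the path enters and leaves horizontally, so the corresponding square/rectangular group is non-extremal and, by the first paragraph, is exactly the floor-$n$ cohomology group occurring in this sequence; at the two \emph{endpoint} degrees the path may switch floors, and there the group can be extremal. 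Deleting those extremal endpoint terms leaves exactness intact at every remaining node, because along a local horizontal cochain an extremal group occurs only at a transition point and never strictly inside it; what is left is precisely the asserted exact sequence of non-extremal square/rectangular cohomology groups. Together with the previous paragraph this establishes Theorem \ref{Thm:2.3}.

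I expect the real difficulty to be the bookkeeping in this last step: matching the path-index $p$ with the floor-degree $q$; checking, from conditions (ii)--(iii) of a path connected by couples together with \eqref{Eq:2.3}, that a vertical map never sits strictly between two consecutive horizontal Leech maps of a local horizontal cochain, so that deleting the extremal endpoints cannot break exactness at a retained node; and verifying that the identifications of the first paragraph carry the connecting homomorphisms $\partial$. A minor caveat is the non-extremal group framed by two vertical maps, which appears when $\pi$ descends through several floors in a fixed column: one then either assumes the vertical cochains \eqref{Eq:2.2} to be ES theories as well, or --- as in Remark \ref{Rm:2.1} --- chooses $\pi$ so as to ``avoid in the distance'' such configurations. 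The generalisations of Remark \ref{Rm:2.1} (paths allowed to go up, left, or diagonal) then follow by the same argument applied to the correspondingly refined form of \eqref{Eq:2.3}.
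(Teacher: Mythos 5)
Your proposal follows essentially the same route as the paper's (very brief) proof: identify each non-extremal group with a cohomology group of the theory on some floor, transport the four pointwise axioms across that identification, and obtain local exactness by deleting the extremal endpoint terms of a local horizontal (co)chain. If anything you are more careful than the paper --- in particular your caveat about non-extremal groups framed by two vertical maps (which the paper's definition of ``extremal'' does not exclude, yet which are cohomology groups of the vertical cochains \eqref{Eq:2.2} rather than of a floor theory) flags a case that the paper's one-paragraph argument silently passes over.
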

\begin{proof}
We noticed above, when defining the extremal groups, that the non-extremal groups are in fact the same as the groups of some (co)homology theory on some floor. Thus, it is clear that the four axioms above hold locally, with the meaning that indeed they certainly hold w.r.t. non-extremal (co)homology groups. Local exactness is obtained in a similar way, noting that in this case we need to delete the extremal groups from the sequence to assure exactness.
\end{proof}
\begin{remark}\label{Rm:2.2}
\normalfont If the path $\pi$ is chosen in a proper way, the exactness axiom will give us sequences with many terms. For instance, in order to do this we may choose $\pi$ so that the vertical local chains are short (at most two or three terms) while the horizontal local cochains are long (at least five terms, for example). Clearly, this last part always happens, at some point, when we deal with rectangular (co)homology in its "basic" form (i.e. not the generalisations considered in Remark \ref{Rm:2.1}). Indeed, at a certain floor we have to stop: we cannot go below it anymore. Thus, at that point we will certainly have an infinite (or finite, if the (co)chain is bounded) local horizontal (co)chain, which will give a long exact sequence that is "similar to the usual ones" given by the global exactness axiom. Also note that the method of "avoiding in the distance" the trivial groups may be useful to assure that these axioms locally hold.
\end{remark}
It is possible, in some cases (depending on the chosen vertical homomorphisms), that the ES axioms also hold in the usual sense, i.e. globally. However, the locality of the axioms above already works properly. Indeed, even though we may loose information in some of the extremal cases, most of it is maintained. Actually, we can only loose "passage" information, that is, information that encodes the passage from a (co)homology theory on a floor to the (co)homology theory on the floor below it (or above it; remember Remark \ref{Rm:2.1}).\\
We conclude noting that, actually, the above result can be generalised in various directions.
\begin{remark}\label{Rm:2.3}
\normalfont A first generalisation can be obtained considering less axioms. For instance, if all the (co)homology theories on each floor are actually generalised (co)homology theories (i.e. they do not satisfy the dimension axiom), then Theorem \ref{Thm:2.3} still holds (without the local dimension axiom, clearly). Similarly, if instead of the ES axioms we have something slightly different (an example can be found in [51]), the same axioms will continue to hold, in a local version, also for square and rectangular (co)homology. Furthermore, an even more general case is admissible: if only some theories (on certain floors) satisfy some kinds of axioms (even different from each other), these will still hold, locally, for square and rectangular (co)homology. Clearly, even in such cases, as before, we will exclude the extremal groups (for which the axioms may or may not hold, depending on the chosen vertical homomorphisms).
\end{remark}
\section{A cohomology theory related to $f_s$}
We now start to develop a cohomology theory which arises from $f_s$ (see Section 2 in [1]). We will use $\cup$, $\cap$ and $\subseteq$ with collections of sets in the same way as for sets (as we did in [1]). In the cited paper we used the symbol $\mathcal{T}$ to indicate the space of all the local structures $f_s(U_p)$ of a structured space $X$. We will adopt a similar notation in what follows. First of all, start with an algebraic structure $X=:X_0$. Let $A_1$ be an algebraic structure (we are temporary dropping the heavy notations $(A_1,f_{s_1})$, ...), and define $X_1:=A_1 \cup X_0$, where we suppose that $A_1$ is such that $X_1 \neq X_0$. We clearly have $X \equiv X_0 \subsetneq X_1$. Now consider another algebraic structure $A_2$, and let $X_2:=X_1 \cup A_2$, where we suppose again that $A_2$ is such that $X_2 \neq X_1$. Proceed this way up to, say, $X_k$ (we also allow a countable infinite number of steps, so we could have $k=+\infty$):
\begin{equation}\label{Eq:3.1}
X_n:=X_{n-1} \cup A_n
\end{equation}
with $A_n$ algebraic structure such that $X_n \neq X_{n-1}$ (let $A_0:=X_0 \equiv X$, so that this equation also holds in that case). Clearly, the space $X_k$ constructed this way is a structured space \footnote{To each $x \in X_k$ we associate the first algebraic structure in which they appear: this means that, if $x \in A_n$ and $x \not \in A_j$ $\forall j < n$, then we associate to $x$ the (fixed) neighborhood $A_n$ (recall by Proposition 1.1 in [1] that we can always define a topology on $X_k$ so that all the fixed neighborhoods, in this case all the $A_n$'s, are open; this justifies the possibility of associating $A_n$ to a point $x$ as above).}. The structure map $f_s$ of this space is constructed starting from the chosen structure: for instance, as we said above $A_n$ was just a way to actually say $(A_n,f_{s_n})$. This means that the fixed structure $A_n$ in $X$ gives us $f_s(A_n):=f_{s_n}(A_n)$, for all $n$. Therefore, we have a structured space $(X_k, f_s)$. By definition of $f_s$, we can now only use this map to describe all the local structures in $X_k$, so we will not anymore use $f_{s_n}$ (this is why we dropped the notation with these maps before).\\
We now construct an abelian monoid consisting of the $f_s(A)$'s and their products. First of all, let $\mathcal{U}_n$ indicate the set of all the local structures of $X_n$, that is:
$$ \mathcal{U}_n= \lbrace A_0, A_1, ..., A_n \rbrace $$
As we said above, $f_s|_{X_n}:\mathcal{U}_n \rightarrow \mathcal{T}_n$. In the following, we also need to use the empty set. We thus define:
\begin{equation}\label{Eq:3.2}
f_s(\emptyset):=\lbrace (0), (0), \lbrace \emptyset \rbrace \rbrace
\end{equation}
where $(0)$ means no operations, no properties, respectively, while $\lbrace \emptyset \rbrace$ has already been defined in [1], and indicates that we have no additional non-algebraic structures. Thus, $f_s(\emptyset)$ represents a structure which actually has no structure at all. Let
\begin{equation}\label{Eq:3.3}
\mathcal{\widehat{U}}_n:=\mathcal{U} \cup \lbrace \emptyset \rbrace
\end{equation}
and now consider $\widehat{f}_s: \mathcal{\widehat{U}}_n \rightarrow \mathcal{\widehat{T}}_n$, with the obvious meaning of
\begin{equation}\label{Eq:3.4}
\mathcal{\widehat{T}}_n:=\mathcal{T}_n \cup \lbrace f_s(\emptyset) \rbrace
\end{equation}
Henceforth, we will only use $\widehat{f}_s$; therefore, we will drop this notation, preferring instead the use of $f_s$ (which also avoids confusion with the modified structure map $\widehat{f}_s$ defined in Section 2.5 of [1]; here, however, we will not make use of this modified map, since it is equivalent to the structure map, as shown in [1]). We now define an operation $\times$ for these structures. This is done, in a first step, similarly to what we did in Proposition 3.1 of [1]. More precisely, consider $f_s(A),f_s(B) \in \mathcal{\widehat{T}}_n$. We then define \footnote{Recall by [1] that $f_s(A) \equiv f_s(B)$ means that: (i) $A,B$ are endowed with the same number of operations (they could also be different, as usual in the theory of algebraic structures); (ii) there is a bijection between the operations in $A$ and the ones in $B$ which satisfies the following condition: if $+_r$ is an operation in $A$ to which there corresponds some properties given by some encoding functions in $g_2(A)$ (see [1]), then the associated (by the bijection) operation $\cdot_t$ in $B$ must have precisely the same properties (given this time by encoding functions in $g_2(B)$) of $+_t$ (pay attention to this fact: if $A$ is considered to be a group, and $B$ to be a magma (even though it could be considered a group), we will clearly have that their structures $f_s(A)$ and $f_s(B)$ are different because of (ii)); (iii) they must have the same additional non algebraic structure.}
\begin{equation}\label{Eq:3.5}
f_s(A) \times f_s(B) := \begin{cases} f_s(A) (\equiv f_s(B)), & \text{if}\ f_s(A) \equiv f_s(B) \\ f_s(A), & \text{if}\ f_s(B) \equiv f_s(\emptyset) \\ f_{1,2} (A \times B), & \text{otherwise}\  \end{cases}
\end{equation}
This requires some comments. First of all, $f_{1,2} (A \times B)$ has been defined in the proof of Proposition 3.1 in [1]. Clearly, if $f_s(A) \equiv f_s(B)$, only one (and hence both, since they are the same) is contained in $\mathcal{\widehat{T}}_n$. Thus, this actually means that we are evaluating $f_s(A) \times f_s(A)$. It is well known that products of (Lie) groups are still (Lie) groups, products of rings are still rings, ... Indeed, if instead of using $f_{1,2}$ we define the following $t$-th operation:
$$ (a,b) +_t (c,d) := (a \cdot_t c, b \cdot_t d)$$
for all the (same) operations on $A$ and $B$ (with $f_s(A) \equiv f_s(B)$), we can easily see that we obtain the same number of operations on $A \times B$, which also satisfy the same properties as before. For instance, if $\cdot_t$ is commutative, then $a \cdot_t c =c \cdot_t a$ and $b \cdot_t d = d \cdot_t b$, from which we get $(a,b) +_t (c,d) = (c,d) +_t (a,b)$. Thus also $+_t$ is commutative. This can be easily generalised to many other cases, but it does not always happen. Moreover, in some cases, we need instead to define in another way the new operation on the product space in order to have the same structure. For instance, we do not define componentwise multiplication on $\mathbb{R}^2 \cong \mathbb{C}$ if we want this product of fields to be a field itself. It is clear that, by our definition of equivalence of algebraic structures $\equiv$, we can define somehow a certain kind of operation so that the product of two equivalent structures has still the same structure of the factors. Thus, we will henceforth consider algebraic structures such that $f_s(A) \times f_s(A) \equiv f_s(A)$, where the product structure is defined (\emph{only in this case, where the \textit{two} factors are the same}) in any possible way such that it is equivalent to the structure of each of its factors. See later for a complete definition of 'standard algebraic structures', which will be the only kind of algebraic structures considered in what follows. It is then clear that, in any such case, \eqref{Eq:3.5} is justified even for the product of two equivalent structures. We also explicitely notice that, if $f_s(A) \equiv f_s(B)$, it can be easily verified by \eqref{Eq:3.5} that $f_s(A) \times f_s(C) \equiv f_s(B) \times f_s(C)$.\\
Note that, if $f_s(A)$ is a substructure of $f_s(B)$, their product will still be defined using $f_{1,2}$, because as we explained in [1] we have assigned different structures to them, and we will not (unless we want to change a local structure) switch to the substructure. Moreover, it is clear that if $f_s(A)$ is a proper substructure of $f_s(B)$ ('proper' means that $f_s(A) \not \equiv f_s(B)$) then their product is not equal to $f_s(A)$ (that is, their product does not reduce to the substructure). For instance, it is clear that if $f_s(B)$ has some properties in addition to the ones in $f_s(A)$, then the properties defined component-wise by $f_{1,2}$ are certainly more than the ones in $f_s(A)$. Indeed, just take the properties $(P_j,\widehat{P})$, where $P_j$ is any property of $f_s(A)$ and $\widehat{P}$ is some property of $f_s(B)$ which is not satisfied by $f_s(A)$. Then, these properties cannot reduce to properties of $f_s(A)$ (while, for example, $(P_j,P_j)$ could be reduced to a property ($P_j$) of both $f_s(A)$ and $f_s(B)$). A similar argument also holds for the other components of the structure map, from which we conclude the proof of the statement above.\\
Now we turn to the non-algebraic structures, which can be more difficult to deal with. We are used to structures which maintain the algebraic structures under products; however, our general definition via the structure map does not assure that this is always true. We will call a non-algebraic structure $\lbrace ... \rbrace_C$ 'standard' if it is such that:
\begin{equation}\label{Eq:3.6}
\lbrace ... \rbrace_C \times \lbrace ... \rbrace_C \equiv \lbrace ... \rbrace_C
\end{equation}
which indeed means that the product has still the same non algebraic structure. An algebraic structure is called 'standard' if $f_s(A) \times f_s(A) \equiv f_s(A)$ (which happens quite often with the "most used" algebraic structures, as we have just seen). Henceforth, we will only deal (with this cohomology theory) with standard algebraic structures. With all of this in mind, we can say that $f_s(A) \times f_s(A) \equiv f_s(A)$, from which we conclude that the definition for $f_s(A) \equiv f_s(B)$ given in \eqref{Eq:3.5} makes sense.\\
It is easy to see that the operations and the properties defined in $f_{1,2}$ commute (indeed, since the operations and the properties are defined componentwise in such a way that they are related to either $A$ or $B$, there is no reason why we shoud assume an order); furthermore, if we have $\lbrace ... \rbrace_A \times \lbrace ... \rbrace_B$, it is clear that (for the same reason as above) we can also consider, equivalently, $\lbrace ... \rbrace_B \times \lbrace ... \rbrace_A$. Thus, also the additional non-algebraic structures commute and we can conclude that:
$$ f_s(A) \times f_s(B) \equiv f_s(B) \times f_s(A)$$
For associativity we need to define the product of an element $f_s(A) \times f_s(B)$ (with $f_s(A),f_s(B) \in \mathcal{\widehat{T}}_n$), which belongs to $\mathcal{\widehat{T}}^2 _n:= \lbrace f_s(A) \times f_s(B), \, \text{for some} \, f_s(A),f_s(B) \in \mathcal{\widehat{T}}_n \rbrace$ \footnote{Here $\mathcal{\widehat{T}}_n \times \mathcal{\widehat{T}}_n$ consists of couples $(f_s(A),f_s(B))$, with $f_s(A),f_s(B) \in \mathcal{\widehat{T}}_n$, so it is different from $\mathcal{\widehat{T}}^2 _n$, where instead we used the operation $\times$ defined for the $f_s(A)$'s.} times an element in $\mathcal{\widehat{T}}^2 _n$ (note that this space contains $\mathcal{\widehat{T}}_n$). More generally, the product will be defined starting from this situation. We notice that, in fact, we have define above (up to now) only the product of two element in $\mathcal{\widehat{T}}_n$. The general definition involves the notion of 'minimal product representation'. Consider any product $f_s(A) \times f_s(B)$ with $f_s(A),f_s(B) \in \mathcal{\widehat{T}}_n$. Its minimal product representation $p_m$ is "the simplest structure, decomposed into factor of elements in $\mathcal{\widehat{T}}_n$, to which it is equivalent", i.e.:\\
1) if $f_s(A) \not \equiv f_s(B)$, $p_m(f_s(A) \times f_s(B)):=f_s(A) \times f_s(B)$ \\
2) If $f_s(B)$ is either $\equiv f_s(A)$ or $\equiv f_s(\emptyset)$, then $p_m(f_s(A) \times f_s(B)):=f_s(A)$\\
When dealing with $p_m$, we will NOT use the equivalence relation $\equiv$. This is because of the possible confusion that could arise when defining $\times$ (see below). More precisely, we could also use it, but we then would need to consider another (equivalent) definition of $\times$, which will be shown below for the sake of completeness.\\
More generally, define \textit{inductively}:\\
1) If $f_s(A_1), ..., f_s(A_i)$ are all different from each other, then we define their product $\times$ (which belongs to $\mathcal{\widehat{T}}^i _n \setminus \mathcal{\widehat{T}}^{i-1} _n$) as $f_{1,2}(A_1 \times ... \times A_i$)\\
2) If any structure is equivalent to another one, the minimal product representation will be the product (up to now, this has not been defined yet in this second case, so consider it formally for a while) of the non-equivalent structures (for instance, if $f_s(A) \equiv f_s(B)$ but $\not \equiv f_s(C)$ (and they all belong to $\mathcal{\widehat{T}}_n$), then $p_m(f_s(A) \times f_s(B) \times f_s(C)) = f_s(A) \times f_s(C)$). Then we define the product of the structures as the $f_{1,2}$ whose argument is the product of the sets used as arguments of the structure maps \emph{in the minimal product representation} \footnote{Important: NOT to some structure equivalent to the minimal product representation. This is the reason why we avoided the use of $\equiv$ with $p_m$: we consider only the groups which are arguments of the non-equivalent structures, that is, the ones in the minimal product representation; otherwise, the product would not be well defined. We actually notice that any equivalent expression of the minimal product representation \textit{having the same number of factors} would work as well.} (for instance, in the previous example we would have: $f_s(A) \times f_s(B) \times f_s(C):=f_{1,2}(A \times C)$ (or $f_{1,2}(B \times C)$, which is easily seen to be equivalent)). To say this in other words: consider any possible product representation, and select, among all of them, some factors which are not equivalent to each other. Actually, we will need to consider the collection of all the possible factors which are not equivalent to each other, which can be seen as the 'maximal collection' (these factors are precisely the ones which will be used in the minimal product representation, from which follows the equivalence of these two definitions). Then, the product is given by definition by $f_{1,2}$ evaluated at the product of all the sets used in the maximal collection (and since $f_s(A) \equiv f_s(B)$, and $f_s(C)$ not equivalent to them, where these elements all belong to $\mathcal{\widehat{T}}_n$, imply that $f_s(A) \times f_s(C) =f_{1,2}(A \times C) \equiv f_s(B) \times f_s(C) = f_{1,2}(B \times C)$ (this can be easily generalised), there is no problem with the fact that we can use different sets whose structures are equivalent).\\
We thus finally have a definition of $\times$ for element in $\mathcal{\widehat{T}}^i _n$. Indeed, since any element of $\mathcal{\widehat{T}}^i _n$ can be reduced in minimal product representation, we can always evaluate $f_s(A) \times f_s(B)$, where the two factors belong to some $\mathcal{\widehat{T}}^i _n$'s (even different): their product is defined as the $f_{1,2}$ of the product set of the only sets in the arguments of the structure maps that are factors in the minimal product representation of $f_s(A) \times f_s(B)$, which is clearly given by the product of all the non-equivalent factors of both elements (i.e. we have $p_m(f_s(A) \times f_s(B))=p_m(p_m(f_s(A) \times p_m(f_s(B)))$). We notice that commutativity still holds, by definition of $p_m$ and by definition of $\times$. We also verify that associativity holds: consider the case where in the products there are some equivalent elements (of course, if they are all different there is no problem: the associativity is clear by definition of $f_{1,2}$ by the equivalence relations $\sim_i$, with $i=1,2,3$, defined in Section 2 of [1]; moreover, if there is any $f_s(\emptyset)$, again there is no problem, clearly). Consider $f_s(A) \times (f_s(B) \times f_s(B))$, where the elements in the product belong to some $\mathcal{\widehat{T}}^i _n$'s and with $f_s(A) \not \equiv f_s(B)$ (the other cases can be treated similarly). Then, $f_s(B) \times f_s(B)$ is by definition equivalent to $f_s(B)$ \footnote{It is important to notice that we use $p_m$ \emph{only before} the application of $f_{1,2}$. So, since $f_{1,2}$ can be equivalent to other structure maps, if for instance $f_s(A) \equiv f_s(B) \times f_s(B) \times f_s(C)$, then, by definition (supposing $f_s(B) \not \equiv f_s(C)$ are both in $\mathcal{\widehat{T}}_n$, so that their product is in minimal representation): $f_s(A) \times f_s(A) = f_{1,2}(B \times C)$. However, it is also clear that $(f_s(B) \times f_s(C)) \times (f_s(B) \times f_s(C)) =$ (by definition) $f_{1,2}(B \times C)$ and this is $\equiv  f_s(B) \times f_s(C)$ by \eqref{Eq:3.5}. Thus, we conclude that $f_s(A) = f_{1,2}(B \times C) = f_s(B) \times f_s(C) =: f_s(B) \times f_s(B) \times f_s(C) \equiv f_s(A)$. This can be generalised to more factors simply using the general definition of $\times$ instead of \eqref{Eq:3.5} in the step above. Therefore, since $p_m$ is used only to select the sets that will be used with $f_{1,2}$, we can still use non-minimal representations with the product map $f_{1,2}$ that lead to equivalences as the above one.}, and hence $f_s(A) \times (f_s(B) \times f_s(B)) \equiv f_s(A) \times f_s(B)$. But $(f_s(A) \times f_s(B)) \times f_s(B)$ is defined so that $(f_s(A) \times f_s(B))=f_{1,2}(A_1 \times ... \times A_t \times B_1 \times ... \times B_r)$ (using minimal representations), and consequently $(f_s(A) \times f_s(B)) \times f_s(B)=f_{1,2}(A_1 \times ... \times A_t \times B_1 \times ... \times B_r)$ (because the factors in $f_s(B)$ are all equivalent to some of the ones in the minimal product representation of $f_s(B)$) $ \equiv f_s(A) \times f_s(B)$. We have thus verified that they both give the same result, so that associativity holds. The element $f_s(\emptyset)$ clearly does not change the structure, and thus (this justifies the remaining part of \eqref{Eq:3.5}):
$$f_s(A) \times f_s(\emptyset) \equiv f_s(A)$$
from which we conclude that $f_s(\emptyset)$ is the identity element.\\
We want to have a space which is closed under $\times$. As we said above, $f_s(A) \times f_s(B)$ (with $f_s(A),f_s(B) \in \mathcal{\widehat{T}}_n$) belongs to $\mathcal{\widehat{T}}^2 _n:= \lbrace f_s(A) \times f_s(B), \, \text{for some} \, f_s(A),f_s(B) \in \mathcal{\widehat{T}}_n \rbrace$, which contains $\mathcal{\widehat{T}}_n$ (and, thanks to this inclusion, we can say that $\bigcup_{i=1}^{2} \mathcal{\widehat{T}}^i _n \equiv \mathcal{\widehat{T}}^2 _n$). Thus, more generally (where all the factors below belong to $\mathcal{\widehat{T}}_n$):
\begin{equation}\label{Eq:3.7}
f_s(B_1) \times f_s(B_2) \times ... \times f_s(B_r) \in \bigcup_{i=1}^{r} \mathcal{\widehat{T}}^i _n \equiv \mathcal{\widehat{T}}^r _n
\end{equation}
Then, the space $K_n:=\bigcup_{i=1}^{\infty} \mathcal{\widehat{T}}^i _n$ \footnote{This could also be indicated by $\mathcal{\widehat{T}}^{\infty} _n$, as in the equation \eqref{Eq:3.7}, but here we prefer to use the notation using the infinite union.} is certainly closed under $\times$ (we also note that, since $\mathcal{\widehat{T}}_n \subseteq \mathcal{\widehat{T}}_{n+1}$ $\forall n$, we have $K_n \subseteq K_{n+1}$ $\forall n$). Consequently, by what we said above, this space is an abelian monoid under $\times$. We now could try to use various methods to obtain an abelian group. One of these is the well known Grothendieck completion, used for instance in K-theory, but this would give rise to trivial cohomologies. Another attempt could be made trying to define some inverse elements, say $f_s(A)^{-1}$, which would be a formal structure with the meaning of 'nullifying the action of $f_s(A)$'. Even though this could be rigorously justified, the problem is that our (finite, since it has a finite number of elements) abelian monoid $K_n$ has too many idempotent elements, i.e. elements $a \in K_n$ such that $a=a \times a = a \times a \times a= ...$. The problem is that, it is well known that a finite monoid is a group if and only if it has only one idempotent, the identity element. So, even if we added all the possible inverses, we would still have too many idempotents. We thus have to abandon the idea of constructing an abelian group which contains as subset the abelian monoid $K_n$. The idea is then to use another cohomology theory, Leech cohomology, apply it to all the monoids we have, and then develop a new cohomology theory starting from these ones. This is precisely what we did in Section 2; we now have a justification of the introduction of that general theory.\\
Since all the $K_n$'s are monoids (the fact that they are abelian is not necessary here, because the theory in Section 2 holds for general monoids), we can use such theory in this particular case. We can now state the following important result (which is a simple application of Theorem \ref{Thm:2.1} and Theorem \ref{Thm:2.2}):
\begin{theorem}[Cohomology theory for structured space related to $f_s$]\label{Thm:3.1}
Consider some algebraic structures $A_0, A_1, ..., A_k$ (we also allow a countably infinite number; in such cases, use in what follows $k=\infty$) and define $X_n$ for each $n \geq 0$ as in \eqref{Eq:3.1}, supposing that the $A_n$'s are such that $X_{n+1} \neq X_n$ $\forall n$. Then the spaces $K_n$ defined below \eqref{Eq:3.7} are (abelian) monoids under $\times$ (see \eqref{Eq:3.5} and the general definition below it), and we then define a cohomology theory, which arises from the structure map $f_s$ (by definition of $K_n$), for the structured space $(X_k,f_s)$ as the rectangular cohomology (or the square cohomology, if $k=\infty$) w.r.t. some $\mathcal{A}_n$, $\pi_B$, $\lbrace \widetilde{\partial}(p,j) \rbrace_{p,j \in \mathbb{N}_0}$, as given by Theorem \ref{Thm:2.2} (Theorem \ref{Thm:2.1} if $k=\infty$). If the square/rectangle turns out to be a double complex, then there are two other possible cohomologies that can be defined (always related to $f_s$): these are shown in Section 2.1.
\end{theorem}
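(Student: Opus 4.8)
The plan is to verify that the hypotheses of Theorem \ref{Thm:2.2} (respectively Theorem \ref{Thm:2.1} when $k=\infty$) are satisfied by the sequence $\{K_n\}$, and then simply to invoke that theorem; the bulk of the work has in fact already been done in the discussion preceding and following \eqref{Eq:3.7}, where it was checked that $\times$ is a well-defined operation on each $K_n$ (via minimal product representations), that it is commutative and associative, that $f_s(\emptyset)$ is an identity element, and that each $K_n=\bigcup_{i\geq 1}\mathcal{\widehat{T}}^i_n$ is closed under $\times$. So the first step is merely to record, citing that discussion, that each $(K_n,\times)$ is an abelian monoid (the abelianness being irrelevant for what follows, since Section 2 works for arbitrary monoids).

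The second step is to check that the $K_n$ are pairwise distinct, which is precisely the hypothesis "$M_i\neq M_j$ $\forall i\neq j$" demanded by Theorems \ref{Thm:2.1} and \ref{Thm:2.2}. By assumption $X_{n+1}=X_n\cup A_{n+1}\neq X_n$, so $A_{n+1}$ contains at least one point lying in no $A_j$ with $j\leq n$; hence $A_{n+1}$ is a genuinely new fixed neighborhood and $f_s(A_{n+1})$ a genuinely new local structure, so $\mathcal{\widehat{T}}_n\subsetneq\mathcal{\widehat{T}}_{n+1}$, and therefore $K_n\subsetneq K_{n+1}$ (the inclusion $K_n\subseteq K_{n+1}$ having already been noted, and strictness following since $f_s(A_{n+1})\in\mathcal{\widehat{T}}_{n+1}\setminus\mathcal{\widehat{T}}_n\subseteq K_{n+1}\setminus K_n$). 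The strict chain $K_0\subsetneq K_1\subsetneq\cdots$ gives $K_i\neq K_j$ for all $i\neq j$.

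The third step feeds $\{K_n\}$ into the machinery of Section 2. Choose for each $n$ a $\mathcal{D}K_n$-module $\mathcal{A}_n$, let $_nC^{\bullet}_L(K_n,\mathcal{A}_n)$ be the associated Leech cochain with its coboundary maps $_n\partial^p$, choose a path $\pi_B\colon B\subseteq\mathbb{N}^2_0\to B\setminus\{(0,0)\}$ connected by couples (in the finite case with the modification $\pi(k,j):=(k,j+1)$ imposed before Theorem \ref{Thm:2.2}), and choose vertical homomorphisms $_n\widetilde{\partial}^p\colon{}_nC^p_L(K_n,\mathcal{A}_n)\to{}_{n+1}C^p_L(K_{n+1},\mathcal{A}_{n+1})$ with $_{n+1}\widetilde{\partial}^p\circ{}_n\widetilde{\partial}^p=0$ and satisfying \eqref{Eq:2.3}; as observed there, the trivial family $_n\widetilde{\partial}^p\equiv 0$ always qualifies (and, by Proposition \ref{Prop:2.1}, still yields a nondegenerate theory), so the required data always exist. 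Theorem \ref{Thm:2.2} (or Theorem \ref{Thm:2.1}) then produces the cochain obtained by following $\pi_B$ and the cohomology groups \eqref{Eq:2.4}, which we declare to be the $f_s$-cohomology of the structured space $(X_k,f_s)$; it "arises from $f_s$" exactly because the input monoids $K_n$ were manufactured from the local structures $f_s(A)$ and their $\times$-products.

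Finally, if the resulting square (rectangle) happens to be a double cochain complex — which occurs, for instance, for the trivial vertical family, and more generally whenever \eqref{Eq:2.3} is strengthened to commutativity of every square — then Section 2.1 supplies both the bigraded groups $H^{p,q}$ and the cohomology of the total complex $\Tot(C)_n=\bigoplus_{p+q=n}C^{p,q}$, each of which is again an invariant of $(X_k,f_s)$; these are the two further cohomologies alluded to in the statement. The only point requiring genuine (though very mild) attention is the distinctness of the $K_n$ in the second step, and even that reduces at once to the standing hypothesis $X_{n+1}\neq X_n$; everything else is a direct appeal to the results of Section 2.
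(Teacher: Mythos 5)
Your proposal follows the same route as the paper: the discussion in Section 3 preceding the theorem already establishes that each $K_n$ is an abelian monoid under $\times$ (well-definedness via minimal product representations, commutativity, associativity, identity $f_s(\emptyset)$, closure), and the theorem itself is obtained by feeding the sequence $\lbrace K_n \rbrace$ into Theorem \ref{Thm:2.2} (or Theorem \ref{Thm:2.1} when $k=\infty$), exactly as in your first, third and fourth steps; the paper explicitly describes the result as ``a simple application'' of those theorems and gives no further proof. The one place where you go beyond the paper is your second step, the verification that $K_i \neq K_j$ for $i \neq j$ (the hypothesis $M_i \neq M_j$ of Theorems \ref{Thm:2.1} and \ref{Thm:2.2}), which the paper never checks --- it only records $K_n \subseteq K_{n+1}$. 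You are right to flag this, but your justification has a soft spot: from $X_{n+1} \neq X_n$ you correctly get that $A_{n+1}$ is a new subset containing a new point, but it does not follow that $f_s(A_{n+1})$ is a new \emph{structure}. Two distinct fixed neighborhoods can carry equivalent structures (for instance $A_{n+1}$ a disjoint copy of $A_0$ with the same operations, properties and non-algebraic structure), and since the operation \eqref{Eq:3.5} identifies equivalent structures ($f_s(A)\times f_s(B):=f_s(A)$ when $f_s(A)\equiv f_s(B)$), in such a case $\mathcal{\widehat{T}}_{n+1}$, and hence $K_{n+1}$, need not properly contain $K_n$ as a monoid. So either one reads $\mathcal{\widehat{T}}_n$ as literally the set of images $f_s(A_j)$ (distinct as soon as the $A_j$ are, regardless of equivalence), or one must add the hypothesis that $f_s(A_{n+1})$ is not equivalent to any element already generated in $K_n$. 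This is a gap inherited from the paper rather than introduced by you; everything else in your write-up matches the paper's argument.
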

\begin{remark}\label{Rm:3.1}
We remark again that we can always choose the trivial family of vertical homomorphisms.
\end{remark}
We notice that, as we wanted, many of the cohomology groups of the new theory given by $f_s$ are often the same as some Leech cohomology groups associated to some (also different) monoids in our sequence. Thus, the interesting part of this cohomology theory is that it sums up various cohomology groups related to different (meaning: related to different monoids) Leech cohomologies, even (possibly) adding other new cohomology groups; choosing a proper path $\pi$, we can encode as much information as possible. This way, our theory has the meaning of 'capturing information from as many Leech cohomologies of the monoids as possible, in order to describe in the most accurate way the given sequence'. As we had already noticed at the beginning of Section 2, \textit{one of the most interesting part of a square/rectangular cohomology} is when we associate a sequence (in this case, the $K_n$'s) to something (in this case, to a structured space), and then we "forget" about the sequence and use the cohomology theory for the object considered (\textit{so, after applying one of the above results, we can forget about the $K_n$'s and just use the cohomology theory obtained for the structured space}).\\
We conclude observing that, the more the different structures $f_s(A)$'s, the more interesting the cohomology theory is. For instance, a $\mu$-CRD (see Section 4 in [1]) is really interesting, in particular if its $\mathcal{U}$ contains a countably infinite (and not finite) number of $U_p$'s.
\section{A cohomology theory related to $h$}
We conclude this article with a cohomology theory which arises from $h$ (see Section 4 in [1]). Again, this will follow from a simple application of the results in Section 2 (in this case, of Theorem \ref{Thm:2.2}), but before of this we need to find some monoids related to this function. Consider a structured space $X$ with $\mathcal{U}=\lbrace U_p \rbrace_{p=0}^{k}$ (with $k \neq \infty$; obviously, the $U_p$'s in $\mathcal{U}$ are all different (i.e. $U_j \neq U_i$ $\forall i \neq j$), since it is a collection and so it does not contain copies of the same element), and suppose that $h$ is surjective onto the power collection without the empty sets (as in the hypothesis of Theorem 4.1 in [1]). To have an idea of the meaning of this last assumption, note that the surjectivity of $h$ means that
$$\bigcap_{U_p \in \mathcal{B}} U_p \neq \emptyset$$
whichever is the nonempty subcollection $\mathcal{B}$ of $\mathcal{U}$. We will need the following equivalence relations:
\begin{equation}\label{Eq:4.1}
x \sim_1 y \Leftrightarrow h(x)=h(y)
\end{equation}
and
\begin{equation}\label{Eq:4.2}
h(x) \sim h(y) \Leftrightarrow |h(x)|=|h(y)|
\end{equation}
where $|h(x)|$ is the cardinality of the collection \footnote{For instance, if $\mathcal{C}=\lbrace A, B, C \rbrace$ (of course, with $A \neq B$, $B \neq C$ and $A \neq C$), $|\mathcal{C}|=3$.}, which is always finite since $k \neq \infty$ and the image of $h$ is the power collection (cardinality: $2^k$) without the empty sets. We choose the "representatives" of the equivalence classes $[h(x)]$ as follows: by the surjectivity of $h$, by \eqref{Eq:4.1} and by \eqref{Eq:4.2}, we know that we can fix a $U_t \in \mathcal{U}$ such that the representatives of all these equivalence classes contain it. Then, we can fix another (different) $U_m \in \mathcal{U}$ and choose the representatives so that all of them except one and only one of them contain it. Proceeding this way, we will get one and only one representative (call it $h_0$) containing only one element, one and only one representative (call it $h_1$) containing only two elements (and one of them is the element contained in $h_0$), ... After properly renaming the $U_p$'s, we can write:
\begin{equation}\label{Eq:4.3}
h_r=\lbrace U_0, U_1, ..., U_r \rbrace
\end{equation}
It is then clear that we have a chain of inclusions:
\begin{equation}\label{Eq:4.4}
h_0 \subsetneq h_1 \subsetneq h_2 \subsetneq ...
\end{equation}
Now, if we define
\begin{equation}\label{Eq:4.5}
\widetilde{h}_r:=h_r \, \cup \, \lbrace \emptyset \rbrace
\end{equation}
it is clear that \eqref{Eq:4.4} still holds replacing each $h_r$ with $\widetilde{h}_r$. Consider the spaces:
\begin{equation}\label{Eq:4.6}
g_r:= \lbrace \bigcup_{A \in \mathcal{A}, \, \text{for some} \, \mathcal{A} \subseteq \widetilde{h}_r } A \rbrace
\end{equation}
(i.e. $g_r$ contains all the possible unions of elements in $\widetilde{h}_r$). Clearly, $g_r$ is closed under $\cup$; furthermore $\cup$ is commutative and associative in $g_r$, and there is also an identity element ($\emptyset$) in this space. Thus, each $g_r$ is an abelian monoid. As in the previous section, Grothendieck group construction would lead to trivial cohomologies in such a situation, and again adding some kind of 'inverse elements' would not be useful, because we have too many idempotent elements (in fact, all the elements are idempotent under $\cup$, since obviously $A \cup A \cup ... =A$ for any $A$). We thus need some other construction for our cohomology theory; the rectangular cohomology developed in Section 2 is an interesting answer to this problem (since $k$ is finite, it is clear that the chain of inclusions with the $\widetilde{h}_r$'s is finite. Since that chain still holds replacing each $\widetilde{h}_r$ with $g_r$ (as it can be easily proved), we will have a finite number of monoids). A simple application of Theorem \ref{Thm:2.2} in this situation yelds the following important result:
\begin{theorem}\label{Thm:4.1}
Let $(X,f_s)$ be a structured space with $\mathcal{U}=\lbrace U_p \rbrace_{p=0}^{k}$, $k \neq \infty$. Suppose that the map $h$ defined in Section 4 of [1] is surjective onto the power collection without the empty sets. Then use this assumption together with \eqref{Eq:4.1} and \eqref{Eq:4.2} to reorder the $U_p$'s in such a way that \eqref{Eq:4.3} holds. Define $g_r$ as in \eqref{Eq:4.6}; each $g_r$ is an abelian monoid under $\cup$, so we can consider $\lbrace g_r \rbrace_{r=0}^{\widehat{k}}$ (where $\widehat{k}$ is the number of $g_r$'s minus $1$, which is clearly finite) and apply Theorem \ref{Thm:2.2} to get a cohomology theory for structured spaces that arises from $h$. If the rectangle turns out to be a double complex, then there are two other possible cohomologies that can be defined (always related to $h$): these are shown in Section 2.1.
\end{theorem}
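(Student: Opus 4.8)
The plan is to verify that the data in the statement meets every hypothesis of Theorem \ref{Thm:2.2} and then to apply that theorem verbatim; no essentially new argument is required, only a sequence of checks. Explicitly, I will (1) justify the reordering that produces \eqref{Eq:4.3}; (2) confirm that each $g_r$ is an abelian monoid under $\cup$; (3) extract from the $g_r$ a finite sequence of \emph{pairwise distinct} monoids, which is the one point needing genuine care and is exactly why the statement introduces $\widehat{k}$; and (4) exhibit an admissible choice of coefficient modules, path, and vertical coboundary maps so that Theorem \ref{Thm:2.2} outputs the desired cohomology, after which the alternatives of Section 2.1 follow in the double-complex case.

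For (1): since $k \neq \infty$, the collection $\mathcal{U}$ is finite with $k+1$ members, so its nonempty subcollections have cardinalities $1, 2, \dots, k+1$, and surjectivity of $h$ onto the power collection without the empty sets means every such subcollection is realized as some $h(x)$. Hence the relation $\sim$ of \eqref{Eq:4.2} splits the image of $h$ into precisely $k+1$ classes, one for each cardinality, and in the class of cardinality $r+1$ I am free to choose \emph{any} $(r+1)$-element subcollection as representative. Choosing these representatives to form a maximal chain $h_0 \subsetneq h_1 \subsetneq \dots \subsetneq h_k = \mathcal{U}$ and relabelling the $U_p$ accordingly gives exactly \eqref{Eq:4.3}, hence \eqref{Eq:4.4}, hence the analogous strict chain for the $\widetilde{h}_r$ of \eqref{Eq:4.5}. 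For (2): by \eqref{Eq:4.6}, $g_r = \{\, \bigcup_{i \in S} U_i : S \subseteq \{0,\dots,r\}\,\}$, where $S = \emptyset$ yields $\emptyset$ (also available since $\{\emptyset\} \subseteq \widetilde{h}_r$); the union of two such sets is indexed by the union of the index sets, so $g_r$ is closed under $\cup$, commutativity and associativity are inherited from $\cup$, and $\emptyset \in g_r$ acts as identity since $A \cup \emptyset = A$. Thus each $g_r$ is an abelian monoid, and from $\widetilde{h}_r \subseteq \widetilde{h}_{r+1}$ we get $g_0 \subseteq g_1 \subseteq \dots$.

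The only real obstacle is (3): Theorem \ref{Thm:2.2} demands the monoids in the sequence to be pairwise distinct, whereas it may happen that $g_r = g_{r+1}$ — precisely when $U_{r+1}$ is already a union of some of $U_0, \dots, U_r$, so that adjoining it to $\widetilde{h}_r$ creates no new element. Since there are finitely many $g_r$ ($0 \le r \le k$) and they form an increasing chain, I discard repetitions and reindex the distinct ones as $g_0 \subsetneq g_1 \subsetneq \dots \subsetneq g_{\widehat{k}}$ with $\widehat{k} \le k < \infty$; being strictly increasing, this chain is automatically pairwise distinct, and it is the sequence $\{g_r\}_{r=0}^{\widehat{k}}$ of the statement.

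For (4) I feed this sequence to Theorem \ref{Thm:2.2}: choose any $\mathcal{D}g_r$-modules $\mathcal{A}_r$, any path $\pi_B$ connected by couples (with the last-floor convention $\pi(\widehat{k},j) = (\widehat{k},j+1)$), and the trivial vertical family ${}_n\widetilde{\partial}^p \equiv 0$, which always satisfies \eqref{Eq:2.3} and ${}_{n+1}\widetilde{\partial}^p \circ {}_n\widetilde{\partial}^p = 0$. Theorem \ref{Thm:2.2} then produces the rectangular cohomology $H^p_R(\{g_r\}_{r=0}^{\widehat{k}})$, which — being built from the $\widetilde{h}_r$, and hence from $h$ through \eqref{Eq:4.1}--\eqref{Eq:4.2} — is a cohomology theory for $(X, f_s)$ arising from $h$; and whenever the chosen vertical maps make the rectangle a double complex (e.g. always, for the trivial family), the two further cohomologies of Section 2.1, namely the $H^{p,q}$ of the double complex and the cohomology of its total complex, are likewise available, which completes the statement.
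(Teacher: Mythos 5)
Your proposal is correct and follows essentially the same route as the paper, whose own ``proof'' is just the discussion preceding the theorem (build the chain $h_0\subsetneq h_1\subsetneq\dots$ via surjectivity of $h$ and \eqref{Eq:4.1}--\eqref{Eq:4.2}, check each $g_r$ is an abelian monoid under $\cup$, then invoke Theorem \ref{Thm:2.2}). Your step (3) is in fact slightly more careful than the paper, which merely asserts that the strict chain of inclusions passes from the $\widetilde{h}_r$'s to the $g_r$'s ``as it can be easily proved,'' whereas you correctly note that $g_{r+1}=g_r$ can occur when $U_{r+1}$ is a union of earlier $U_i$'s and that discarding repetitions is what makes the sequence pairwise distinct, consistent with the paper's definition of $\widehat{k}$.
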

\section{Conclusion}
As we had noticed in the conclusion of [1], the maps $f_s$ and $h$ have some relations with presheaves, vector bundles, ... This suggested the existence of some cohomology theories for structured spaces related to these two functions. In this paper we have indeed provided such theories, starting from a general cohomology which is obtained using paths on a square or a rectangle. We notice that the square and rectangular cohomologies can be applied in many other situations, as explained for instance in Remark \ref{Rm:2.1}. This general method is particullary interesting when we associate a sequence to some object that we want to study, and we then construct square/rectangular cohomology starting from it (we can also "forget" about the sequence after this passage). Moreover, it is often useful to choose paths that give rise to long local cochains, as explained in Section 2.2, in order to assure that Eilenberg-Steenrod axioms locally hold.\\
\\
\begin{large}
\textbf{Conflict of interest}
\end{large}
\\
The author declares that he has no conflict of interest.

\end{document}